\providecommand{\customgenericname}{}
\newcommand{\newcustomtheorem}[2]{%
  \newenvironment{#1}[1]
  {%
   \ifdefined\crefalias\crefalias{innercustomgeneric}{#2}\fi
   \renewcommand\customgenericname{#2}%
   \renewcommand\theinnercustomgeneric{##1}%
   \innercustomgeneric
  }
  {\endinnercustomgeneric}%
  \ifdefined\crefname\crefname{#2}{#2}{#2s}\fi
}
\newtheorem{theorem}{Theorem}[section]
\newtheorem{corollary}{Corollary}[theorem]
\newtheorem{lemma}[theorem]{Lemma}
\newtheorem{proposition}[theorem]{Proposition}
\theoremstyle{definition}
\newtheorem{definition}{Definition}[section]
\theoremstyle{remark}
\newtheorem*{remark*}{Remark}
\title{Rep-tiles}
 \author[Ryan Blair, Patricia Cahn, Alexandra Kjuchukova, Hannah Schwartz]{Ryan Blair, Patricia Cahn, Alexandra Kjuchukova,\\ And Hannah Schwartz}
\date{September 2025}
\begin{document}

\begin{abstract}
An $n$-dimensional rep-tile is a compact, connected submanifold of $\mathbb{R}^n$ with non-empty interior which can be decomposed into pairwise isometric rescaled copies of itself whose interiors are disjoint.  We show that every smooth compact $n$-dimensional submanifold of $\mathbb{R}^n$ with connected boundary is topologically isotopic to a polycube that tiles the $n$-cube, and hence is topologically isotopic to a rep-tile. It follows that there is a rep-tile in the homotopy type of any finite CW complex.  In addition to classifying rep-tiles in all dimensions up to isotopy, we also give new explicit constructions of rep-tiles, namely examples in the homotopy type of any finite bouquet of spheres.
\end{abstract}
\maketitle

\section{Manifolds which are rep-tiles}

\subsection{Main result.}
We prove that every compact codimension-$0$ smooth submanifold of $\mathbb{R}^n$  with connected boundary can be topologically isotoped to a polycube which tiles the cube $[0, 1]^n$.  As a consequence, any such manifold is topologically isotopic to a rep-tile, defined next. A {\it rep-tile}  $X$ is a codimension-0 subset of $\mathbb{R}^n$ with non-empty interior which can be written as a finite union $X=\bigcup_i X_i$ of pairwise  isometric sets $X_i,$ each of which is similar to $X$; and such that  $X_i, X_j$ have non-intersecting interiors whenever $i\neq j$. A rep-tile in $\mathbb{R}^n$ which is also homeomorphic to a compact smooth manifold will be called a {\it $n$-dimensional rep-tile}. 

Since every $n$-dimensional rep-tile has connected boundary (Lemma~\ref{lem:single-bdry}), as does every $n$-dimensional manifold that tiles the $n$-cube, our result proves that any submanifold of $\mathbb{R}^n$ which could potentially be homeomorphic to an $n$-dimensional rep-tile is in fact isotopic to one.  Thus, our work completes the isotopy classification of manifolds that tile the cube, and of $n$-dimensional rep-tiles, in all dimensions.

In Proposition~\ref{Prop:best} we also establish that, as one might expect,  
every submanifold $X$ of $\mathbb{R}^n$ is isotopic to one which is not a rep-tile. 
In light of our results, it may be said that only questions about the geometry of rep-tiles remain.

\subsection{A brief history of rep-tiles.}
Early sightings of rep-tiles were recorded in~\cite{gardner1963rep, golomb1964replicating}. Because $n$-dimensional rep-tiles tile $\mathbb{R}^n$, rep-tiles have been studied not only for their intrinsic beauty but also in connection with tilings of Euclidean space; see~\cite{gardner1977extraordinary} or~\cite{radin2021conway} for a discussion of the case $n=2$. A notable achievement was a non-periodic tiling of the plane by a rep-tile, due to Conway, which was later used to create the first example of a pinwheel tiling, i.e. one in which the tile occurs in infinitely many orientations~\cite{radin1994pinwheel}. The elegant 2-dimensional rep-tile portrayed in Figure~\ref{fig:L} was the building block in one of Goodman-Strauss's constructions of a hierarchical tiling of $\mathbb{R}^2$~\cite{goodman1998matching} and is also found in~\cite{thurston1989tilings}.

\begin{figure}[htbp]
    \centering
    \includegraphics[width=.5in]{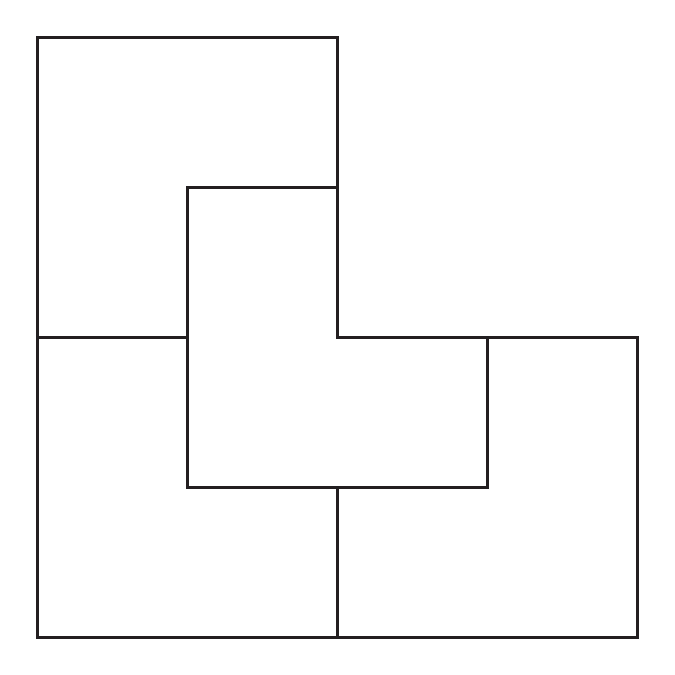}
    \caption{The ``chair" rep-tile.}
    \label{fig:L}
\end{figure}

The first planar rep-tile with non-trivial fundamental group was discovered by Gr\"unbaum, settling a question of Conway~\cite[C17]{unsolved}. 
 In 1998, Gerrit van Ophuysen found the first example of a rep-tile homeomorphic to a solid torus, answering a question by Goodman-Strauss  \cite{vanophuysen}. Tilings of $\mathbb{R}^3$ of higher genus were also constructed in~\cite{schulte1994space}. Tilings of $B^n$ by mutually isometric knots were constructed by~\cite{oh1996knotted}. 
Adams proved that any compact submanifold of $\mathbb{R}^n$ with connected boundary tiles it \cite{Adams95}.  Building on the above work, in 2021 came the homeomorphism classification of 3-dimensional rep-tiles.

\begin{theorem}\label{thm-BMR} \cite{blair2021three} A submanifold $R$ of $\mathbb{R}^3$ is homeomorphic to a 3-dimensional rep-tile if and only if it is homeomorphic to the exterior of a finite connected graph in $S^3$. 
\end{theorem}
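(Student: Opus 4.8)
The plan is to prove the two implications separately; the ``only if'' direction is short, and essentially all of the work lies in the ``if'' direction.

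\emph{Necessity.} Suppose $R\subset\mathbb{R}^3$ is a $3$-dimensional rep-tile. By Lemma~\ref{lem:single-bdry} its boundary $\partial R$ is a single closed orientable surface $\Sigma_g$. Viewing $R$ as a compact connected $3$-submanifold of $S^3$, I would invoke Fox's re-embedding theorem: $R$ can be re-embedded in $S^3$ so that its complement is a disjoint union of handlebodies, and since $\partial R$ is connected this complement is a single genus-$g$ handlebody $H$. A spine $G$ of $H$ is a finite connected graph, and under the re-embedding $R$ is identified with $S^3\setminus\mathrm{int}\,N(G)$, the exterior of $G$. (Alternatively, one might try to argue directly from the self-similar subdivision that the complement of $R$ in $S^3$ is a handlebody, but the re-embedding theorem makes this unnecessary.)

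\emph{Sufficiency.} Let $G\subset S^3$ be a finite connected graph; I must exhibit a rep-tile homeomorphic to $R:=S^3\setminus\mathrm{int}\,N(G)$. Fix a cube $Q$ and its standard subdivision $Q=\bigcup_{i=1}^{m^3}Q_i$ into $m^3$ congruent subcubes of side $1/m$, which already realizes $Q$ as a rep-tile. The strategy is to bore out of $Q$ a system of tunnels---an open regular neighborhood of a graph properly embedded in $Q$, i.e.\ a ``cube with knotted holes'' in the sense of Fox---with two properties: first, the complement $R'=Q\setminus(\text{tunnels})$ is homeomorphic to $R$, possibly after absorbing an auxiliary ball glued along part of $\partial Q$; second, the tunnel system is \emph{self-similar}, in that it restricts on each $Q_i$ to the $(1/m)$-scaled congruent copy of the tunnel system of $Q$, with these copies chaining together across the shared faces of the $Q_i$. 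Given such a construction,
\[
 R'=\bigcup_{i=1}^{m^3}\bigl(Q_i\setminus(\text{tunnels in }Q_i)\bigr)
\]
exhibits $R'$, and hence $R$, as a union of $m^3$ pairwise isometric $(1/m)$-scaled copies of itself with disjoint interiors --- a rep-tile.

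The construction of the self-similar tunnel system is where the real difficulty lies. One wants a recursive template: a short list of tunnel ``gadgets'', each supported in a single subcube and matching its neighbors along faces (a straight through-strand, a strand carrying a crossing, a strand with a trivalent branch point, a strand with a small local knot), together with a rule assembling $m^3$ rescaled gadgets into a tunnel system again of the permitted form---a fixed point of the subdivision rule---whose complement in $Q$ realizes the prescribed homeomorphism type. Chaining gadgets along the subcubes should implement an arbitrary finite sequence of crossing changes, branchings, and handle/connect-sum operations, enough to realize the exterior of any finite connected graph in $S^3$ up to homeomorphism; the genus-one instance of the ``attach a handle'' step is precisely van Ophuysen's rep-tile solid torus~\cite{vanophuysen}. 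The crux is to arrange this while simultaneously guaranteeing genuine self-similarity (the assembled pattern must coincide with the pattern inside each subcube, not merely tile it), that every homeomorphism type of connected graph exterior is attained, including all knotted, linked, and branched cases, and that the decomposition is by honestly disjoint interiors in the smooth category. Building one rigid self-similar scaffold flexible enough to absorb arbitrary knotting, linking, and branching, yet stable under subdivision, is the heart of the matter.
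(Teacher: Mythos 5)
First, note that this theorem is not proved in the present paper at all: it is quoted from \cite{blair2021three}, and the paper's own contribution is to generalize that construction to all dimensions (Theorem~\ref{thm:main}). Your \emph{necessity} direction is correct and matches the discussion following the theorem statement: Lemma~\ref{lem:single-bdry} gives a nonempty connected boundary, and Fox's re-embedding theorem then identifies $R$ up to homeomorphism with the exterior of a spine of the single complementary handlebody.

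The \emph{sufficiency} direction, however, has a genuine gap: your sketch names the construction of a ``self-similar tunnel system'' as the heart of the matter and then leaves it entirely undone, and the construction you ask for is both stronger than necessary and not the one that actually works. You require the tunnel pattern to be a literal fixed point of the cubical subdivision --- the pattern in $Q$ must restrict, in every subcube $Q_i$, to a congruent rescaled copy of the pattern in $Q$ --- and there is no reason such a fixed point should exist for an arbitrary graph exterior; nothing in your sketch produces one. The key reduction you are missing is Lemma~\ref{mantra}: it suffices to realize the graph exterior (up to homeomorphism, or here isotopy) as a \emph{polycube that tiles a cube}, because one can then substitute the whole tiling into each atomic cube of the polycube to obtain the rep-tile decomposition; no self-similar pattern ever needs to be exhibited. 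That is how \cite{blair2021three} proceeds and how Section~\ref{sec:proof} generalizes it: decompose the closed complement of $R$ in a cube into finitely many balls (Theorem~\ref{theorem:ballnumberboundary}), arrange a controlled boundary pattern (the taloned pattern of Lemmas~\ref{connected_bdry_v2.lem} and~\ref{lem:ClawsExist}), and use rotations of the unit cube inside $[-1,1]^n$ to boundary-connect-sum rotated copies of those balls onto $R$ (``ball-swapping''), producing a polycube $R^*$ isotopic to $R$ whose orbit under the rotations tiles the larger cube. Without either this reduction or an actual construction of your recursive gadgets, the ``if'' direction is not established.
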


The above implies that any 3-manifold which could potentially be homeomorphic to a rep-tile is indeed homeomorphic to one. This follows from Fox's re-embedding theorem \cite{fox1948imbedding}, which classifies compact 3-manifolds that embed in $S^3$, together with Lemma~\ref{lem:single-bdry}, which shows that a rep-tile has connected boundary. 

Our main result is Theorem~\ref{thm:main}, which completes the isotopy classification of manifold rep-tiles in all dimensions. In contrast with the above result, we do not rely on a classification of codimension-0 submanifolds of $\mathbb{R}^n$. Instead, we describe the isotopy from any submanifold of $\mathbb{R}^n$ which satisfies the hypotheses of the Theorem to a rep-tile.

\begin{theorem}\label{thm:main}
Let $R\subset \mathbb{R}^n$ be a compact smooth $n$-manifold with connected boundary. Then, $R$ is topologically isotopic to a rep-tile. 
\end{theorem}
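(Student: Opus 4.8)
The plan is to reduce the theorem to a combinatorial tiling statement and then establish that statement by an explicit interlocking construction. The first, elementary, reduction is: \emph{if a polycube $P$ (a finite union of unit cubes of the integer lattice) can be tiled by finitely many congruent copies of itself filling a cube $C=[0,a]^n$, then $P$ is a rep-tile.} Indeed, write $v$ for the number of cells of $P$, so $a^n = mv$ where $m$ is the number of copies of $P$ used to fill $C$. Inflating $P$ by the factor $a$ produces a region $aP$, similar to $P$, which is the disjoint-interior union of $v$ translates of $C$; and each of those translates of $C$ is, by hypothesis, the disjoint-interior union of $m$ congruent copies of $P$. Hence $aP$ is the disjoint-interior union of $vm=a^n$ congruent copies of $P$, each similar to $aP$, so $P$ is a rep-tile with replication number $a^n$ and scaling factor $a$; by Lemma~\ref{lem:single-bdry} it then has connected boundary, as it must.

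So it suffices to prove that $R^n$ is topologically isotopic to a polycube that tiles a cube. To enter the polycube setting, first isotope $R^n$ to a polycube $Q$: its boundary $\partial R^n$ is a closed smooth hypersurface of $\mathbb{R}^n$, and a small ambient isotopy, obtained from transversality and the isotopy extension theorem, pushes $\partial R^n$ onto the grid of a sufficiently fine cubical lattice, making $R^n$ a union of grid cubes; being small, this isotopy preserves the topological type of the pair $(\mathbb{R}^n,R^n)$, in particular the connectivity of the boundary. The substantive claim is then: \emph{every polycube with connected boundary is topologically isotopic to a polycube that tiles a cube.} I would prove this by a direct construction in the spirit of Adams' tiling theorem \cite{Adams95}: realize a representative of the prescribed isotopy class as a cube carved, along the lattice, with a pattern of ``fingers'' and ``channels'' such that, when congruent copies are laid out on a coarser grid, the channels of each copy are filled exactly by the fingers of its neighbours; the complement of $Q$ in the large cube is thereby tiled by further copies of $Q$, and the large cube is filled. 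The topology of $R^n$ would be installed handle by handle, using a handle decomposition of $R^n$ with a single $0$-handle and no $n$-handle (available since $\partial R^n\neq\emptyset$) and routing the attaching regions along shared facets of the grid.

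The main obstacle is this interlocking construction, on two counts. First, one must produce an \emph{exact} tiling of the cube: the copies of $Q$ must be pairwise congruent, have pairwise disjoint interiors, and leave no gaps---delicate combinatorial bookkeeping which already constitutes the heart of Adams' argument in dimension three and must here be made to work uniformly in $n$. Second, and more seriously, the conclusion must be an \emph{ambient isotopy}, not merely a homeomorphism: codimension-zero submanifolds of $\mathbb{R}^n$ are not classified up to isotopy by their homeomorphism type---knotting already occurs for solid tori in $\mathbb{R}^3$---so the finger-and-channel pattern must encode not just the homotopy or homeomorphism type of $R^n$ but its precise position in $\mathbb{R}^n$, equivalently the pair formed by $Q$ and its complementary region. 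It is the tension between this rigidity and the demand of an exact self-similar tiling that makes the construction hard; its advantage over the route of Theorem~\ref{thm-BMR} is that it never requires a classification of the submanifolds in play.
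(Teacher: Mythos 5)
Your reduction is sound and matches the paper's: the observation that a polycube tiling a cube is a rep-tile is exactly Lemma~\ref{mantra}, and the intermediate statement you isolate (``$R$ is isotopic to a polycube that tiles a cube'') is precisely Theorem~\ref{tile_cube.thm}. But you have not proved that statement --- you explicitly defer the ``interlocking construction'' and name it as the main obstacle --- and that construction is the entire content of the theorem. Moreover, the finger-and-channel scheme you sketch is not how the paper resolves it, and as described it gives you no handle on the one constraint you correctly identify as serious: that the output must be \emph{ambiently isotopic} to $R$, not merely homeomorphic to it. Carving fingers and channels into a cube to ``install the topology handle by handle'' builds some manifold of the right homeomorphism type; it does not reproduce the given embedding of $R$ in $\mathbb{R}^n$.

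The paper's mechanism is different and is worth contrasting with your sketch, because it dissolves exactly that difficulty. One places $R$ in $C^n=[0,1]^n\subset\boxplus=[-1,1]^n$ and decomposes the \emph{complement} $\overline{C^n\setminus R}$ into at most $n$ balls $B_1,\dots,B_n$ with disjoint interiors (Theorem~\ref{theorem:ballnumberboundary}, the Kobayashi--Tsukui ball-number bound --- this is the input your proposal lacks). After a homotopy arranging a ``taloned'' boundary pattern (Lemmas~\ref{connected_bdry_v2.lem} and~\ref{lem:ClawsExist}) so that each $B_i$ meets $\partial C^n$ in a small disk on the facet $F_i$, one uses the group generated by $\pi/2$-rotations about the coordinate $(n-2)$-planes, whose orbit of $C^n$ consists of the $2^n$ unit subcubes of $\boxplus$. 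The rep-tile is $R^*=R\cup\bigcup_k\bigl(r_k^{-1}(B_{2k-1})\cup r_k(B_{2k})\bigr)$: each complementary ball is swapped for a rotated copy of itself sitting in an adjacent subcube and attached to $R$ by boundary connected sum. Since one only ever adds balls along boundary disks, $R^*$ is isotopic to $R$ --- this is how the isotopy constraint is met, with no need to encode the embedding combinatorially --- and a counting/orbit argument shows the $2^n$ rotated copies of $R^*$ tile $\boxplus$. Cubification is then done at the end, on the finished decomposition, rather than at the outset as in your sketch. Without the complement-ball decomposition and the ball swap, your proposal stops exactly where the proof has to begin.
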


\begin{corollary}\label{cor:homotopy} 
Let $X$ be a compact connected CW complex of dimension $n\geq 1$. Then $X$ is homotopy equivalent to a $(2n+1)$-dimensional rep-tile.
\end{corollary}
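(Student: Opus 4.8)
The plan is to produce a compact smooth codimension-zero submanifold $N\subset\mathbb{R}^{2n+1}$ with connected boundary and $N\simeq X$, and then apply Theorem~\ref{thm:main}. To build $N$, I would first replace $X$ by a homotopy equivalent finite simplicial complex $K$ of dimension at most $n$; every finite $n$-dimensional CW complex admits such a model (build it cell by cell, using simplicial approximation and mapping cylinders). Then I would embed $K$ in $\mathbb{R}^{2n+1}$ by the standard general-position argument: place the vertices of $K$ on the moment curve $t\mapsto(t,t^{2},\dots,t^{2n+1})$, so that any $2n+2$ of them are affinely independent; since two disjoint simplices of $K$ involve at most $(\dim K+1)+(\dim K+1)\le 2n+2$ vertices in total, they span disjoint affine simplices, so the affine extension is an embedding. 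This uses exactly $2\dim K+1\le 2n+1$, and it places $K$ in codimension $2n+1-\dim K\ge n+1\ge 2$.

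Next I would let $N\subset\mathbb{R}^{2n+1}$ be a smooth regular neighborhood of $K$, i.e.\ a compact smooth $(2n+1)$-manifold with boundary that deformation retracts onto $K$; concretely one may take $N=f^{-1}([0,\varepsilon])$ for a small regular value $\varepsilon>0$ of a smooth function $f\colon\mathbb{R}^{2n+1}\to[0,\infty)$ with $f^{-1}(0)=K$, the inclusion $K\hookrightarrow N$ being a homotopy equivalence because $K$ (a compact ANR) has arbitrarily small neighborhoods that deformation retract onto it. Then $N\simeq K\simeq X$.

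The one substantive point is that $\partial N$ is connected, which is exactly what is required to apply Theorem~\ref{thm:main} to $N$. Since $\dim K\le n<2n$, we have $\tilde H^{2n}(N)\cong\tilde H^{2n}(K)=0$, so Alexander duality in $S^{2n+1}\supset N$ gives $\tilde H_0(S^{2n+1}\setminus N)\cong\tilde H^{2n}(N)=0$; removing a point of $S^{2n+1}$ outside $N$ (harmless as $2n+1\ge 3$) shows $\mathbb{R}^{2n+1}\setminus N$ is connected, hence so is its closure $C$, a $(2n+1)$-manifold with boundary whose interior $\mathbb{R}^{2n+1}\setminus N$ is connected. Thickening $N$ and $C$ to open sets via collars of $\partial N$ and applying the reduced Mayer--Vietoris sequence to $\mathbb{R}^{2n+1}=N\cup C$ with $N\cap C\simeq\partial N$: the vanishing of $\tilde H_1(\mathbb{R}^{2n+1})$ and of $\tilde H_0(N),\tilde H_0(C),\tilde H_0(\mathbb{R}^{2n+1})$ forces $\tilde H_0(\partial N)=0$, so $\partial N$ is connected.

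Finally, Theorem~\ref{thm:main} applied to $N$ yields a rep-tile $R$ that is topologically isotopic, and in particular homeomorphic, to $N$; hence $R\simeq N\simeq X$, which is the claim. I do not expect a real obstacle here: the general-position embedding of an $n$-complex in $\mathbb{R}^{2n+1}$ and the existence of smooth regular neighborhoods are classical, and the connectedness of $\partial N$ is a one-line duality computation — essentially all of the content is already in Theorem~\ref{thm:main}. The only thing that wants a little care is the preliminary bookkeeping: choosing a simplicial model of $X$ of dimension genuinely $\le n$, so that it both embeds in $\mathbb{R}^{2n+1}$ and sits there in codimension $\ge 2$.
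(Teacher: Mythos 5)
Your proposal is correct and follows essentially the same route as the paper: embed the complex (or a simplicial model of it) in $\mathbb{R}^{2n+1}$, pass to a regular neighborhood $N$, verify that $\partial N$ is connected, and invoke Theorem~\ref{thm:main}. The only differences are in which standard tools carry the two sub-steps --- the paper cites N\"obeling--Pontryagin for the embedding and rules out a second boundary component via the long exact sequence of the pair $(R,N_j)$ together with $H_{2n}(R)=0$, whereas you use the moment-curve general-position embedding and an Alexander duality plus Mayer--Vietoris computation --- and both versions are sound.
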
 

\begin{proof}
    Suppose that $X$ is a compact connected CW complex of dimension $n$. Then, $X$ embeds in $\mathbb{R}^{2n+1}$, by the Nöbeling-Pontryagin Theorem~\cite[p.~125, Theorem~9]{denton1991general}. Let $R$ be a closed regular neighborhood of $X$ in  $\mathbb{R}^{2n+1}$. Then $R$ is a compact $(2n+1)$-manifold embedded in $\mathbb{R}^{2n+1}$. Moreover, $R$ has a single boundary component. Indeed, suppose $\partial(R)$ has two or more connected components $N_1, N_2, \dots N_k.$ 
  Since $N_j$ is a closed $2n$ manifold embedded in $\mathbb{R}^{2n+1}$, it is orientable, so $H_{2n}(N_j; \mathbb{Z})\cong \mathbb{Z}$. Moreover, since $k>1$, we have that $H_{2n+1}(R, N_j)=0$ for each $j=1, 2, \dots, k.$ Therefore, we see from the long exact sequence of the pair that the inclusion-induced map $i_\ast: H_{2n}(N_j)\to H_{2n}(R)$ is injective. But $R$ has the homotopy type of an $n$-complex, which is a contradiction. Therefore, by Theorem \ref{thm:main}, $R$ is isotopic to a rep-tile.
\end{proof}

The proof of Theorem~\ref{thm:main} describes a procedure for isotoping any codimension-0 smooth submanifold $R$ of $\mathbb{R}^n$ with connected boundary to a rep-tile. While the proof is constructive, in effect it is done without writing down any new rep-tiles.  In Section~\ref{sec:suspend} we therefore also give, for any $n\geq 0$, an explicit construction of a rep-tile homeomorphic to $S^n\times D^2$. This leads to an almost equally explicit construction of a rep-tile in the homotopy type of any finite bouquet of spheres. In particular, we can build explicit rep-tiles with non-vanishing homotopy groups in arbitrarily many dimensions.

The paper is organized as follows: in Section~\ref{sec:suspend} we construct a rep-tile homeomorphic to $S^n\times D^2$, presented explicitly as a union of cubes in $\mathbb{R}^{n+2}$, introduce the technique of cube swapping, show how to construct rep-tiles homotopy equivalent to wedges of spheres, investigate suspensions of rep-tiles, and construct rep-tiles with arbitrary footprints. Section~\ref{sec:proof} is where we prove the main theorem.

\subsection{Rep-tiles and tilings of Euclidean space}
Rep-tiles induce self-similar tilings of Euclidean space. Thus, they can potentially be used to construct non-periodic and aperiodic tilings of the plane and higher-dimensional Euclidean spaces. Self-similar tilings have connections to combinatorial group theory~\cite{conway1990tiling}, propositional logic~\cite{wang1960proving, berger1966undecidability, robinson1971undecidability} (where some of the questions in the field originated), and dynamical systems~\cite{thurston1989tilings}, among others.  Our rep-tiles give new self-similar tilings of $\mathbb{R}^n$ by tiles with interesting topology. Additionally, in our proof of Theorem \ref{thm:main} we show that every compact smooth $n$-manifold with connected boundary in $\mathbb{R}^n$ is topologically isotopic to a polycube that tiles a cube. In \cite{golomb1966TilingHierarchy}, Golomb developed a hierarchy for polycubes that tile $\mathbb{R}^n$, and tiling a cube is the most restrictive level of his hierarchy. Hence all compact smooth $n$-manifolds with connected boundary lie in the most restrictive level of Golomb's hierarchy, up to isotopy.

\section{Explicit rep-tiles in all dimensions} 
\label{sec:suspend}

In this section, an $n$-dimensional {\it polycube} is a union of unit $n$-cubes whose vertices lie on the integer lattice in $\mathbb{R}^n$.  We will be repeatedly using the fact that a polycube that tiles a cube is a rep-tile (see Lemma~\ref{mantra}).  
In this section we will realize the homotopy type of certain $m$-manifolds $X$ as rep-tiles by the following procedure. We will construct a polycube $R\subset \mathbb{R}^m$ such that $R\simeq X$ (where $\simeq$ denotes homotopy equivalence) and such that two copies of $R$, related by a rotation, tile the $m$-dimensional cube. In the case where $X$ has the homotopy type of a sphere, $X\simeq S^n$,  the rep-tile $R$ we build is homeomorphic to a trivial 2-dimensional disk bundle on the sphere, $R\cong S^n\times D^2$. (This demonstrates that rep-tiles can have non-trivial $\pi_n$ for all $n\geq 0,$ answering Conway's and Goodman-Strauss's question in dimensions three and higher.) It is a fairly straightforward consequence that finite wedges of spheres of different dimensions can be built similarly.

\subsection{Stacks of cubes}

\label{sec:stacks} 

A {\it stack of $n$-cubes with stacking direction $x_n$} is an $n$-dimensional polycube $S\subset \mathbb{R}^n$ such that:  (1) All $n$-cubes in $S$ lie above the hyperplane $x_n=0$ (that is, every point in $S$ has non-negative $x_n$ coordinate), and (2) for every $n$-cube in $S$ that does not have a face contained in $x_n=0$, there is another $n$-cube of $S$ directly below it (where height is measured by the $x_n$-axis).

Let the subspace of $\mathbb{R}^n$ determined by $x_n=0$ have the standard tiling by $(n-1)$-cubes induced by the integer lattice in $\mathbb{R}^n$. Given $S$, a stack of $n$-cubes with stacking direction $x_n$, we consider its projection to the hyperplane $x_n=0$, which we call its {\it footprint}. By the definition of a stack of cubes, we can think of $S$ as consisting of columns of $n$-cubes lying above each $(n-1)$-cube in its footprint $\mathcal{F}_S$, which is itself an $(n-1)$-dimensional polycube. In other words, the homotopy type of $S$ is determined by $\mathcal{F}_S$; and $S$ itself is determined by $\mathcal{F}_S$, together with integer labels in each $(n-1)$-cube of $\mathcal{F}_S$, specifying the height of the column of $n$-cubes which lie above it. Therefore, we can describe $S$ by such a labeled footprint. Figure~\ref{fig:stack_cubes_0} illustrates a 2-dimensional stack of cubes (left) and its description via a labeling on its 1-dimensional footprint (right).

The image of such a stack of cubes under an isometry of $\mathbb{R}^n$ is also called a stack of cubes, with the image of $x_n$ under the isometry being the stacking direction.

\begin{figure}
    \centering
    \includegraphics[width=4in]{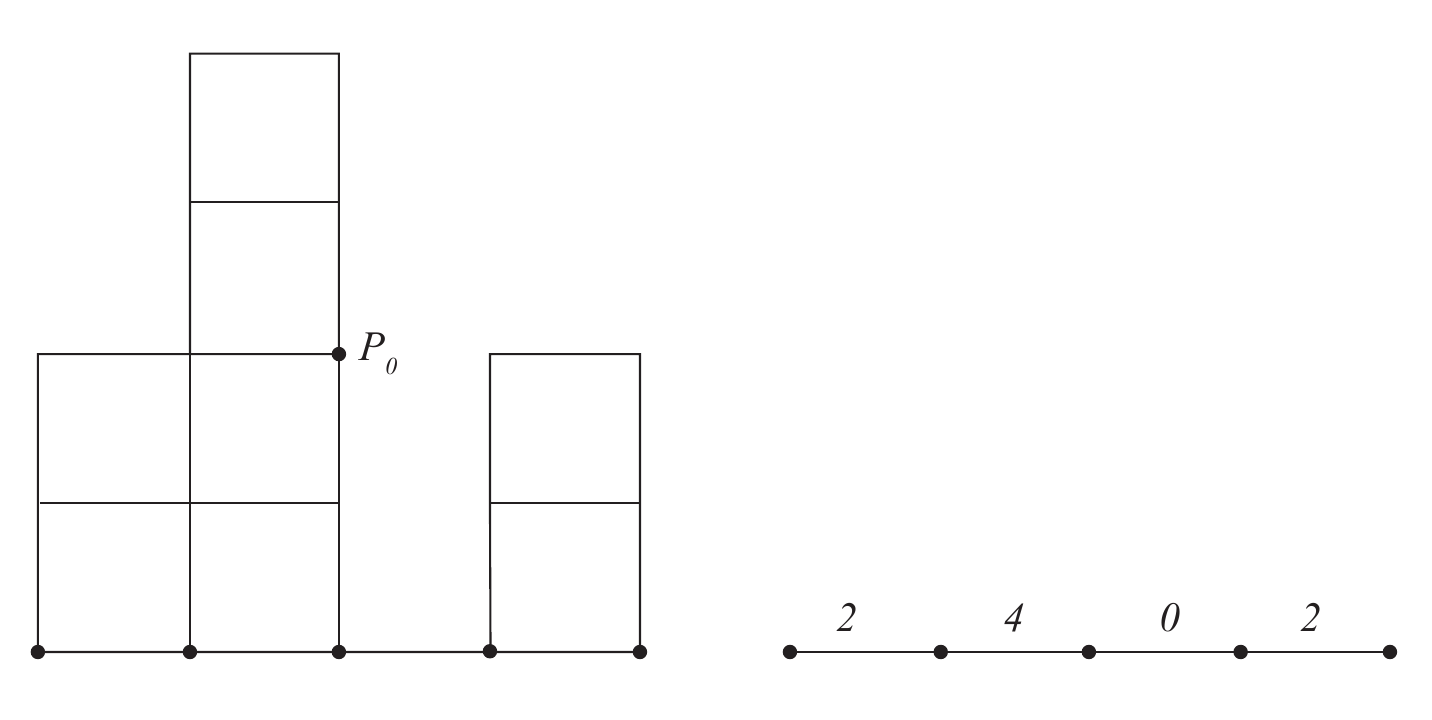}
    \caption {A stack of cubes (left) and its labeled footprint (right). 
    This polycube and its image under rotation by $\pi$ about $P_0$ tile $[0,4]^2$. Thus, the polycube is a rep-tile.}
    \label{fig:stack_cubes_0}
\end{figure}

\subsection{Rep-tiles homotopy equivalent to $S^n$.}\label{sec:spheres}

We use cube-stacking notation as above to describe a rep-tile homeomorphic to $S^n\times D^2$ for all $n\geq 0$. This description is a simplification, suggested by Richard Schwartz~\cite{Schwartz-email}, of the construction given in~\cite{blair2024rep}. We define a stack of cubes $S\subset [0,4]^{n+2}$ as follows.  The footprint $\mathcal{F}_S$ is a polycube in $[0,4]^{n+1}$.
\begin{figure}
    \centering
    \includegraphics[width=\linewidth]{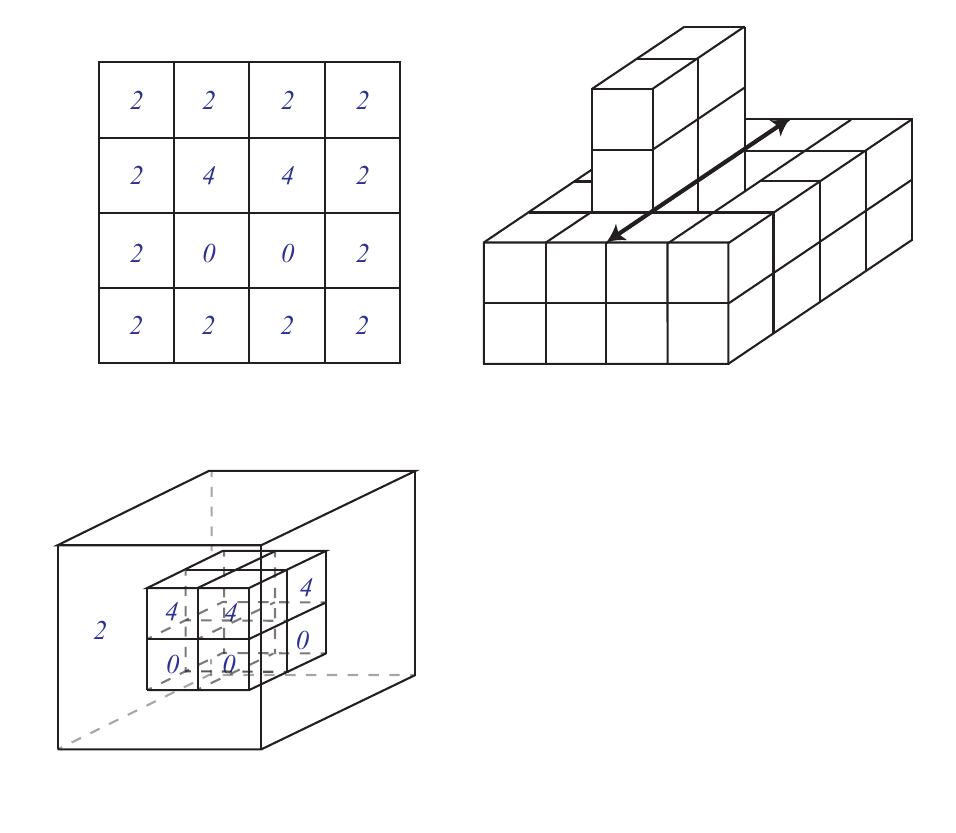}
    \caption{ Top : Footprint of a 3-dimensional rep-tile homeomorphic to $S^1\times D^2$ (left), and its corresponding stack of cubes (right), rotated by 90 degrees for visualization. Bottom: Footprint of a 4-dimensional rep-tile homeomorphic to $S^2\times D^2$ (left), and space to imagine the corresponding stack of cubes (right).}
    \label{fig:sphere_stack_cubes}
\end{figure}

Throughout the following discussion, the reader should refer to Figure~\ref{fig:sphere_stack_cubes}.   Define the {\it core}, denoted $\mathcal{C}$, of $[0,4]^{n+1}$ to be the union of unit cubes in the standard integer-lattice tiling of $[0,4]^{n+1}$ containing the point $(2,...,2)$.  The {\it shell} of $[0,4]^{n+1}$ is $\overline{[0,4]^{n+1}\setminus \mathcal{C}}.$  To create the labeled footprint $\mathcal{F}_S$ of our stack of cubes $S$, we first partition $\mathcal{C}$ into two halves: $\mathcal{C}^+$, those containing cubes with $x_{n+1}$-coordinate at least 2; and $\mathcal{C}^-$, those containing cubes with  $x_{n+1}$-coordinate less than 2.  Finally, we label each cube in $\mathcal{C}^+$ with a 4, and each cube in $\mathcal{C}^-$ with a 0.  All cubes in the shell are labeled 2. (We recall that the label of each $(n+1)$-cube in the footprint indicates the height of the column of $(n+2)$-cubes stacked on top of it.) Observe that $\mathcal{F}_S$, which consists of all unit cubes in $[0,4]^{n+1}$ with nonzero label, is homeomorphic to the shell, which is in turn homeomorphic to $S^n\times D^1$. Similarly, the stack of cubes $S$ determined by this labeling is homeomorphic to $\mathcal{F}_S\times I\cong S^n\times D^2$.

Next we show that $S$ is a rep-tile.  Let $r_\pi:\mathbb{R}^{n+2}\rightarrow \mathbb{R}^{n+2}$ denote rotation by $\pi$ about the $n$-plane which is the intersection of $x_{n+2}=2$ and $x_{n+1}=2$.  Observe that the closure of the complement of $S$ in $[0,4]^{n+2}$ is also a stack of cubes, with stacking direction $-x_{n+2}$, is isometric to $S$, and in particular, is the image of $S$ under $r_\pi.$  As $S$ and $r_\pi (S)$ tile the cube $[0,4]^{n+2}$, and since $S$ is a union of cubes, $S$ is a rep-tile. 

\subsection{Cube swapping.} \label{sec:ball-number}
We note that there is a lot of flexibility regarding the heights of columns in the construction of a rep-tile $S\cong S^n\times D^2$ given above. Consider any column $H$ in $S$ of height $h\in \{1, 2, 3\}$. Let $H'$ denote the column of $S$ which shares a footprint with $r_{\pi}(H)$. Since $H'\cup r_{\pi}(H)$ form a column of height 4, the heights of $H$ and $H'$ add up to 4. Moreover, unit cubes can be traded between $H$ and $H'$ while preserving the property that the resulting polycube and its image under $r_{\pi}$ tile $[0,4]^{n+2}$. As long as both columns remain of height strictly between 0 and 4 and their heights add up to 4, this swap preserves both the homeomorphism type of $S$ and the property that two copies of $S$ tile a cube.

More generally, let $R$ be any non-empty $n$-dimensional polycube in $\mathbb{R}^n$. Let $G$ be a group of isometries of $\mathbb{R}^n$ such that the orbit of $R$ under $G$ tiles a cube $\mathcal{C}$. (As before, this implies that $R$ is a rep-tile.) Let $u$ denote any unit cube contained in $R$ and let $g$ be an arbitrary element of $G$. Denote by $gu$ the image of $u$ under $g$. We note that $R':=\overline{R\backslash u}\cup gu$ is also a polycube whose orbit under $G$ is $\mathcal{C}$. Hence, $R'$ is also a rep-tile. We will refer to this move as a {\it cube swap}. (Aside: the fact that performing a cube swap on a polycube that tiles an $n$-cube produces another polycube that tiles an $n$-cube does not depend in an essential way on the fact that $u$ is a $n$-cube. More complicated pieces could be swapped as well, preserving the tiling property.)  Cube swapping, which was inspired by work of Adams~\cite{Adams95, Adams97}, turns out to be a powerful tool for building rep-tiles, as we will see in Section~\ref{sec:proof}. To be precise, a version of the cube swap -- one which involves an action of a group of order $2^m$ on $[-1, 1]^{m}$ and trading multiple groups of cubes $U_i$ simultaneously across their individual orbits -- is the key idea in the Proof of Theorem~\ref{thm:main}. The second main ingredient in the proof is this: a priori, $R'$ might not have a clear relationship to $R$; to guarantee that $R'$ is homeomorphic to or isotopic to $R$, care must be taken in the choice of group action and the choice of $U_i$.

\subsection{Rep-tilean bouquets}\label{sec:bouquets}

Let $P_{n}=\{\textbf{x}\in \mathbb{R}^{n+2}|x_{n+2}=x_{n+1}=2\}$. The construction in Section \ref{sec:spheres} has produced stacks of $(n+2)$-dimensional cubes in $[0,4]^{n+2}$ with the following useful properties: 
\begin{enumerate}
    \item Each polycube intersects $x_1=0$ in an $(n+1)$-ball equal to $\{0\}\times [0,4]^n \times [0,2]$ and intersects $x_1=4$ in an $(n+1)$-ball equal to $\{4\}\times [0,4]^n \times [0,2]$;
    \item the polycube and its image under rotation by $\pi$ about $P_{n}$ tile $[0,4]^{n+2}$.
\end{enumerate}

Note that any two such polycubes of the same dimension $R_1$ and $R_2$ can be placed side-by-side in the $x_1$ direction so that $R_1$ is contained in $0\leq x_1 \leq 4$ and $R_2$ is contained in $4\leq x_1\leq8$. For example, place two copies of the stack of cubes in the top of Figure~\ref{fig:sphere_stack_cubes} back-to-back. In this configuration $R_1\cap R_2=\{4\}\times [0,4]^n \times [0,2] \cong B^{n+1}$. Thus, $R_1\cup R_2$ has the homotopy type of the wedge $R_1\vee R_2$; and, after rescaling in the $x_1$ direction and subdividing the integer lattice, it too satisfies the conditions (1) and (2) above. 

Now consider $S_{m}$ and $S_k$, two of the rep-tiles constructed in Section \ref{sec:spheres} of dimension $m$ and $k$ respectively. If $m\leq k$, then $S_m\times D^{k-m}$ can be embedded in $[0,4]^{k+2}$ so that conditions (1) and (2) hold. By stacking $S_k$ and this embedding of $S_m\times D^{k-m}$ as in the previous paragraph, we construct a rep-tile in the homotopy type of $S^m\vee S^k$, itself capable of becoming part of a further rep-tilean wedge. By iterating this process, rep-tiles in the homotopy type of any finite wedge of spheres can be constructed.

\subsection{Suspending Rep-Tiles}\label{sec:susp} 
    Let $r_\pi$  be an order 2 rotation about some $(n-2)$-subspace in $\mathbb{R}^n$.  We note that if $R$ is any connected $n$-dimensional stack of cubes such that two copies of $R$, related by $r_\pi$, tile an $n$-cube, then $R$ can be used to construct an $(n+1)$-dimensional rep-tile in the homotopy type of the suspension of $R$. We sketch this construction with a specific choice of coordinates below. For clarity, we assume that $R\cup r_\pi(R)$ tile the cube $[0, 4]^{n}$. 
    
    Let $R$ denote any $n$-dimensional stack of unit cubes which has the property that $R$ and its image under under $r_\pi$ tile $[0, 4]^{n}$.
    (For instance, $R$ could be one of the rep-tiles in the homotopy type of a wedge of spheres that we previously constructed.) Because $R\cup r_\pi(R)=[0, 4]^{n},$ we know that $r_\pi$ takes cubes at height 4 (with respect to the stacking direction) to holes at height zero; and vice-versa. In particular, $R$ contains as many cubes at height 4 as it has unit-cube-sized holes at height 0. Therefore, we may suspend $R$ as by the following steps
    \begin{enumerate}
        \item embed $R\times[0,4]$ into $\mathbb{R}^{n+1}$;
        \item cubify in the natural way, writing $R\times[0,4]$ as a union of unit $(n+1)$-cubes of the form $(n\text{-cube in }R)\times [i, i+1]$;
        \item move all height-4 cubes in $R\times[0, 1]$ to fill all holes at height zero in that slice; \label{item-push1}
        \item repeat the last step in $R\times[3, 4]$.\label{item-push2}
    \end{enumerate}  
    
    Crucially, steps~\ref{item-push1} and~\ref{item-push2} constitute cube swaps (see Section~\ref{sec:ball-number}). This guarantees that the resulting polycube is still a rep-tile. Moreover, since the slice $R\times[i, i+1]$ is a stack of cubes, filling all cubes that correspond to ``height-0 holes" in $R\times [i,i+1]$ (that is, those holes in $R\times[i, i+1]$ which are height-0 holes in $R$ crossed with $[i, i+1]$) turns $R\times[i, i+1]$ into a ball. Therefore, as before, the cube swaps performed in the first and last slices of $R\times[0, 4]$ have the effect, up to homotopy, of contracting each of the ends of $R\times [0, 4]$ to a point. This completes the suspension of $R$.
    Figures~\ref{fig:stack_cubes_1_falling} and~\ref{fig:stack_cubes_2} illustrate the suspensions of rep-tiles homeomorphic to $S^0\times D^2$ and $S^1\times D^2$, respectively. 
    
Let $H$ and $H'$ denote any pair of columns in Figures~\ref{fig:stack_cubes_1_falling} or~\ref{fig:stack_cubes_2} which trade a cube during the cube-swapping operations. Specifically, say $H'$ is height 0 and the top cube of $H$ is moved to $H'$ during the cube swap. Now suppose next highest cube of $H$ (now at height 3) is also moved to column $H'$. This would also constitute a valid cube swap, since the unit cube remains within its orbit under the rotation. Executing this additional swap between all such pairs has the effect that all columns of heights 3 and 1  become columns of height 2. The result would be the $S^n\times D^2$ rep-tile constructed in Section~\ref{sec:spheres}. Put differently, rep-tiles homeomorphic to $S^n\times D^2$ can also be obtained from the $S^0\times D^2$ rep-tile in Figure~\ref{fig:stack_cubes_0} inductively, via a sequence of suspensions and cube swaps. For more details on this approach, see Section~2 of~\cite{blair2024rep}.

\begin{figure}
    \centering
    \includegraphics[width=.75\linewidth]{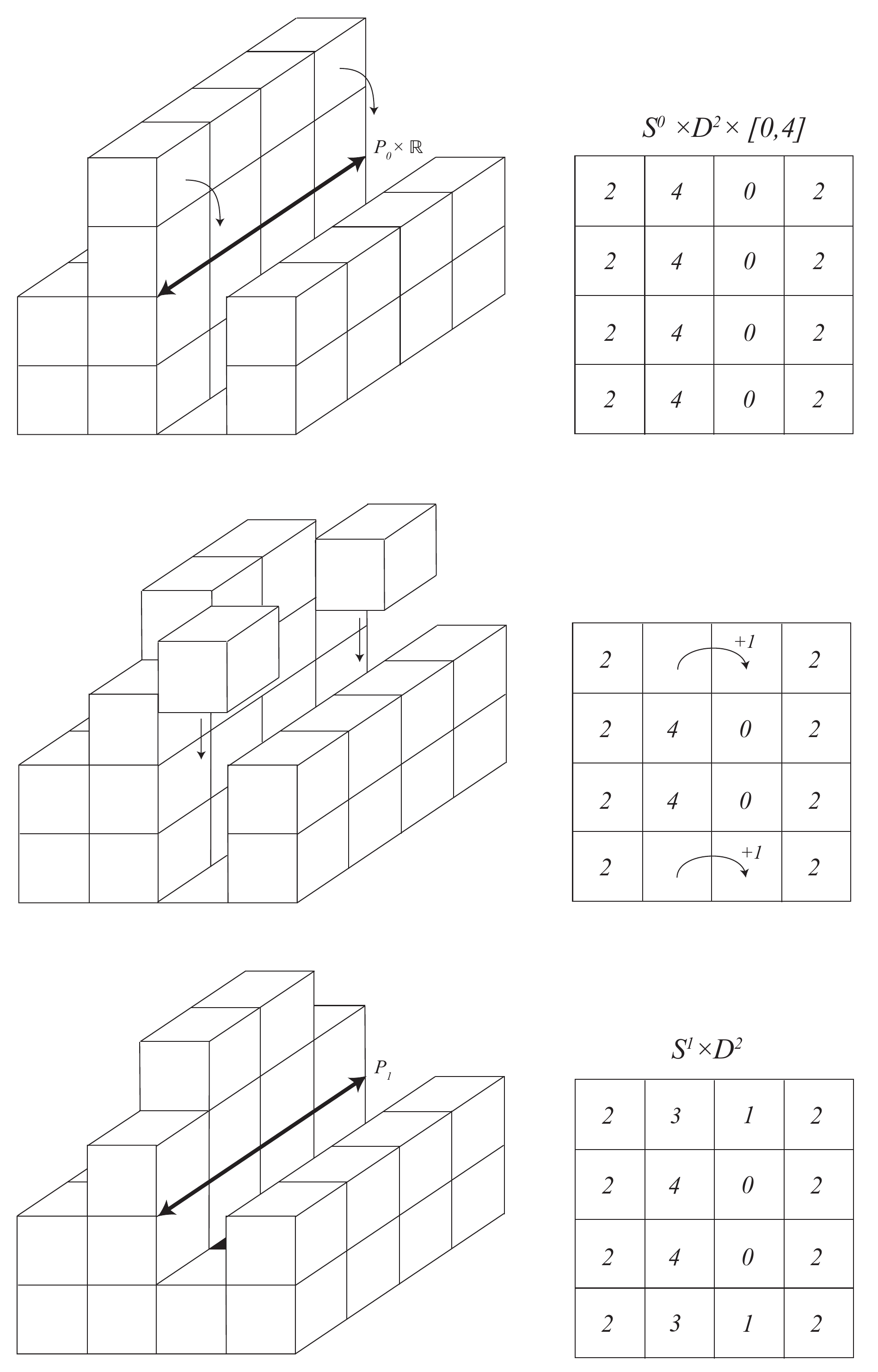}
   \caption{Cube swaps in a 3-dimensional rep-tile. The swap effectuates the suspension of a polycube representation of $S^0\times D^2$ to obtain a polycube representation of $S^1\times D^2$. Top: $S^0\times D^2\times [0, 4]\cong S^0\times D^3$. Middle: A cube swap which ensures that the first and last slices become disks. Bottom: the union of the four layers is a rep-tile homeomorphic to $S^1\times D^2$, the result of the suspension. A further cube swap between the same pairs of columns would result in the $S^1\times D^2$ rep-tile given in Section~\ref{sec:spheres}.}
    \label{fig:stack_cubes_1_falling} 
\end{figure}

\begin{figure}
    \centering
    \includegraphics[width=\linewidth]{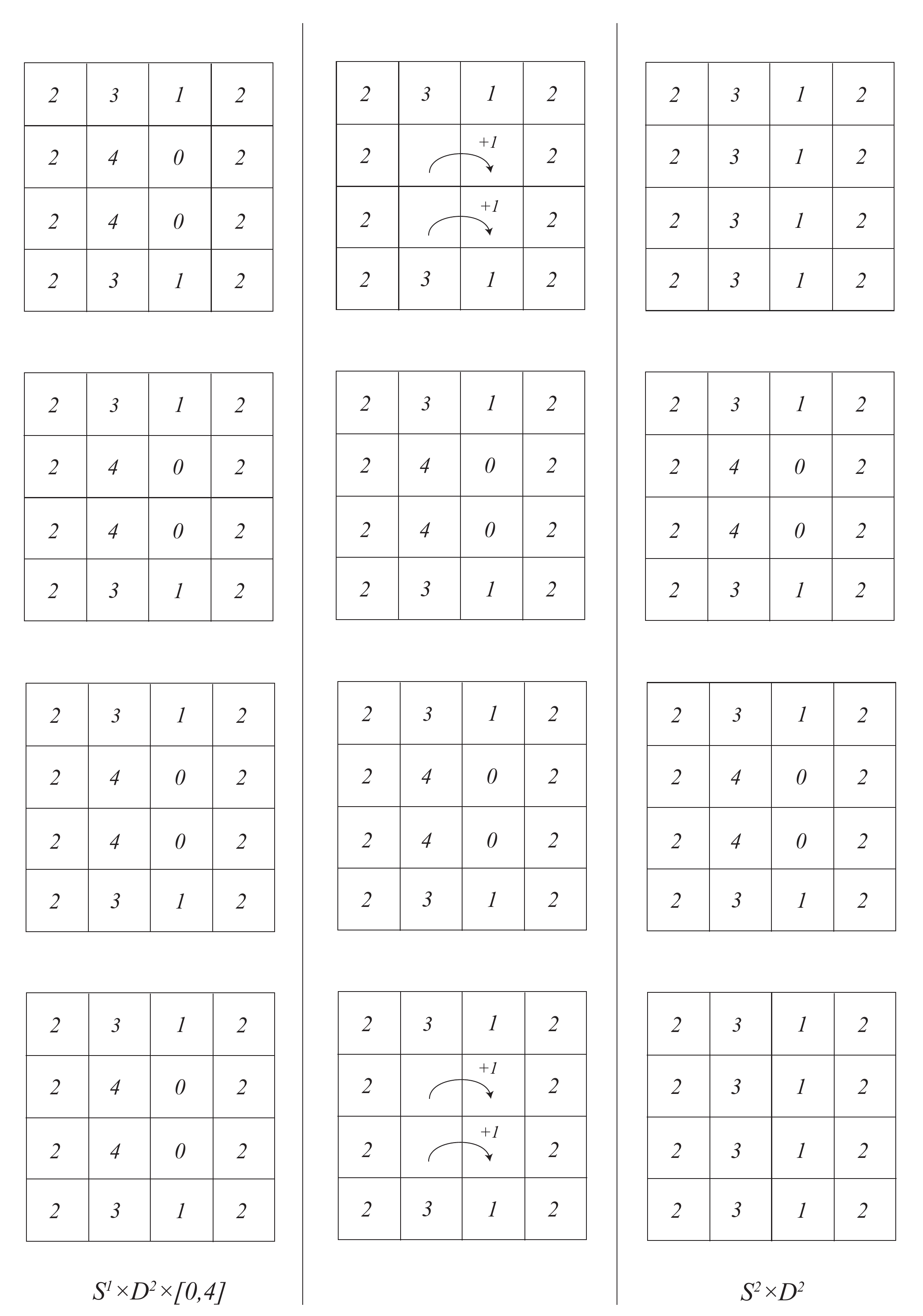}
    \caption{The left and right columns represent the labeled footprints of $4$-dimensional stacks of cubes. Taken together, the three columns depict the process of suspension from $S^1\times D^2$ to $S^2\times D^2$. \\ {\it Left column:} four layers of $S^1\times D^2\times [i, i+1]$, combining to form $S^1\times D^2\times [0, 4]$. {\it Middle column:} cube swaps occur in the first and fourth {slices}. {\it Right column:} bottom {slice}: $D^3\times [0,1]$, second  {slice}: $S^1\times D^2\times[1,2]$; third {slice}: $S^1\times D^2\times[2,3]$; fourth {slice}: $D^3\times[3,4]$. The union of the four {slices} is the suspended rep-tile.\\
    Note that by a further cube swap we could replace all 3's and all 1's by 2's. This would produce another rep-tile homeomorphic to  $S^2\times D^2$, namely the one described in Section~\ref{sec:spheres}.}
    \label{fig:stack_cubes_2}
\end{figure}

\subsection{Rep-tiles with arbitrary footprints} The following was observed by Richard Schwartz~\cite{Schwartz-email} while perusing the first version of our article.
\begin{proposition}\label{prop:Richard}\cite{Schwartz-email}
    There is an $n$-dimensional rep-tile in the homotopy type of any compact polycube in $\mathbb{R}^{n-1}$.
\end{proposition}

This result, together with the existence of cubifications for smooth codimension-0 submanifolds of $\mathbb{R}^n$ (see Section~\ref{sec:cubification}) can be used to prove a version of Corollary~\ref{cor:homotopy}. Specifically, we see that it is possible to realize the homotopy type of any compact $n$-dimensional CW complex as a $(2n+2)$-dimensional rep-tile $R$, without appealing to Theorem~\ref{thm:main}. The present approach uses an extra dimension; but it is rather explicit (given a polycube footprint to start with) and has the advantage that just 2 copies of $R$ can tile the $(2n+2)$-cube.

\begin{proof}[Proof of Proposition~\ref{prop:Richard}]
    
    We first observe that for any compact $(n-1)$-polycube $P$ there is a positive even integer $k$ such that $P$ is isotopic to an $(n-1)$-polycube $P'$ in $[0,k+2]^{n-1}$ such that $P'$ contains all unit cubes in $[0,k+2]^{n-1}$ whose smallest $x_{n-1}$-coordinate is equal to $0$. (To see this, begin by translating $P$ so that it is contained in $[0,k]^{n-1}$. Then, apply the following sequence of isotopies: shift $P$ at least two units away from the $x_{n-1}=0$ hyperplane in the positive $x_{n-1}$ direction; then grow a (cubical) finger out of $P$ until it touches $x_{n-1}=0$; then add the cubes whose union is $ [0,k+2]^{n-2}\times [0,1]$ to $P$.)

Next create an $n$-dimensional stack of cubes $S$ whose footprint is a polycube in $[0,k+2]^{n-2}\times [-(k+2),(k+2)]$, namely the boundary connected sum of $P'$ with $[0,k+2]^{n-2}\times [-(k+2),0]$.  We shall label the $(n-1)$-cubes contained in $[0,k+2]^{n-2}\times [-(k+2),(k+2)]$ to indicate the height of the corresponding column. In this manner, we will obtain the desired stack of $n$-cubes $S$ in the homotopy type of $P$. The $(n-1)$-cubes in $P'$ are labeled $k+2$. All $(n-1)$-cubes which are contained in $[0,k+2]^{n-2}\times [0,(k+2)]$ but not in $P'$ are labeled 0.  Let $r$ denote reflection in $\mathbb{R}^{n-1}$ about the plane $x_{n-1}=0$.  Cubes in $[0,k+2]^{n-2}\times [-(k+2),0]$ that are contained in $r(P')$ are labeled $k+2$. Remaining cubes are labeled $2k+4$. See Figure \ref{fig:arbitrary_footprint}.

Let $\rho$ be rotation about the $(n-2)$-plane in $\mathbb{R}^n$ determined by $x_{n-1}=0$ and $x_n=k+2$.
Next we observe that the sum of the labels of each unit cube in $[0,k+2]^{n-2}\times [-(k+2),(k+2)]$ and its reflection $r$ about $x_{n-1}=0$ sum to $2k+4$. It follows that the stack of cubes $S$ determined by this labeling, together with $\rho(S)$, tile $[0,k+2]^{n-2}\times[-(k+2),k+2]\times [0,2k+4]$. Then, after rescaling, two isometric copies of $S$ tile an $n$-cube. This produces a rep-tile in the homotopy type of the original footprint, $P$, as desired. 
\end{proof}

\begin{figure}
    \centering
    \includegraphics[width=0.5\linewidth]{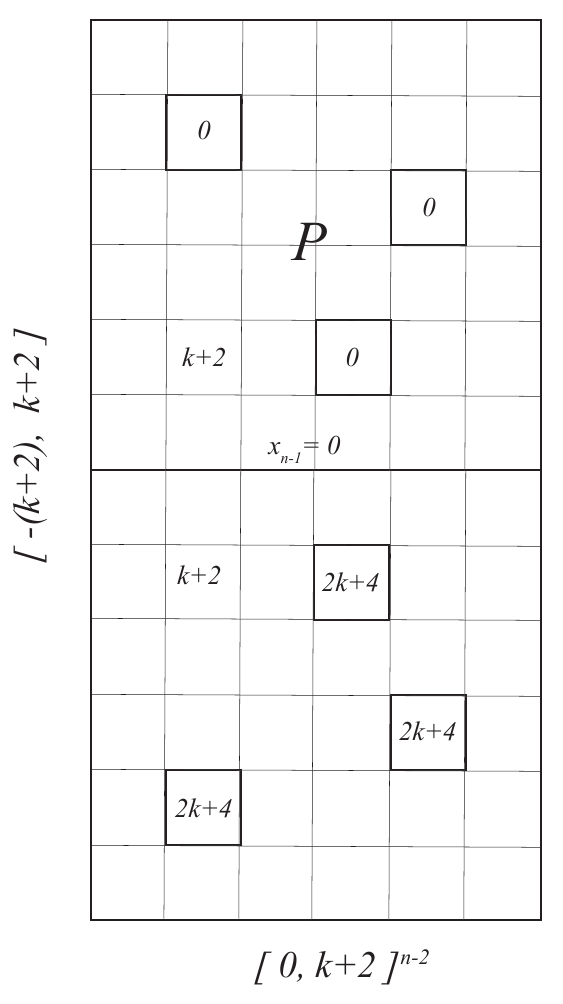}
    \caption{$P$ denotes a compact polycube in $\mathbb{R}^{n-1}$, embedded as a proper subset of a $(n-1)$-dimensional cube (pictured as the top $6\times 6$ square) in the hyperplane $x_n=0$ in $\mathbb{R}^{n}$. The figure is a schematic for constructing an $n$-dimensional rep-tile in the homotopy type of $P$. Specifically, the rep-tile's footprint is the pictured stack of cubes, which is isotopic to $P$. Each unlabeled box has $k+2$ cubes stacked on top of it; the heights of other stacks are as written. Remark that the bottom half of the picture is a stack of cubes homeomorphic to $B^{n}$; its footprint is an $(n-1)$-dimensional cube. This ball is added to $P$ to ensure symmetry.}
    \label{fig:arbitrary_footprint}
\end{figure}

\section{All is rep-tile}
\label{sec:proof}

We will denote the standard integer lattice in $\mathbb{R}^n$, consisting of all points in $\mathbb{R}^n$ with integer coordinates, by $\mathcal{Z}^n$.  This lattice induces a cell structure $\mathcal{C}(\mathcal{Z}^n)$ on $\mathbb{R}^n$, whose $k$-cells are the $k$-facets of unit cubes with vertices in $\mathcal{Z}^n$.  

We will also work with subdivisions of this lattice, and refer to the closed $n$-cells in any such decomposition as {\it atomic cubes}. The size of an atomic cube will depend on the subdivision used. Precisely, suppose $\lambda>0$ and let $f_{\lambda}:\mathbb{R}^n\rightarrow \mathbb{R}^n$ denote the scaling function given by $f(x)=\lambda x$. Let $\mathcal{Z}_{\lambda}^n=f(\mathcal{Z}^n)$, and let $\mathcal{C}(\mathcal{Z}_\lambda^n)$ denote the corresponding cell structure.

\begin{definition}\label{def:polycube}
 An {\it $n$-dimensional polycube} is a submanifold of $\mathbb{R}^n$ that is isometric to a finite union of atomic cubes in $\mathcal{C}(\mathcal{Z}^n_{\lambda})$ for some $\lambda\in\mathbb{R}$.    
\end{definition}

\begin{definition} \label{tiling.def}
   A compact $n$-manifold $T$ is said to \emph{$k$-tile}  a subset $A\subseteq \mathbb{R}^n$ if $A=\cup_{i=1}^{k} T_i$ such that $T_i$ is isometric to $T_j$ for all $i$ and $j$, and $int(T_i)\cap int(T_j)=\emptyset$ for all $i\neq j$.
\end{definition}

\begin{lemma} \label{mantra}
 Let $R$ be an $n$-dimensional polycube that tiles a cube $C$. Then, $R$ is a rep-tile. 
\end{lemma}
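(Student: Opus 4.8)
\noindent\emph{Proof proposal.} The plan is to exhibit $R^n$ as a union of rescaled, pairwise isometric copies of itself by a single self-similar refinement of the given tiling. First I would normalize: after applying an isometry of $\mathbb{R}^n$, I may assume $R^n$ is literally a union of $N$ atomic cubes of $\mathcal{C}(\mathcal{Z}^n_a)$ for some $a>0$ and some integer $N\geq 1$. Let $k$ be the number of tiles in the given tiling of the cube $C$. The only degenerate case is $N=1$ with $k=1$: then $R^n$ is itself a cube (isometric to $C$), and a cube is visibly a rep-tile, since it is the union of $2^n$ cubes of half the side length with disjoint interiors. So from now on I would assume $kN\geq 2$.

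The next step is to extract a common scaling ratio. By Definition~\ref{tiling.def}, $C=\bigcup_{i=1}^{k}T_i$ with the $T_i$ pairwise isometric, each similar to $R^n$, and with pairwise disjoint interiors. Since isometric sets have equal diameter while a similarity scales diameter by its ratio, all the $T_i$ are similar to $R^n$ with one and the same ratio $c>0$; equivalently, each $T_i$ is isometric to $cR^n$, so $cR^n$ tiles $C$. Writing $L$ for the side length of $C$ and comparing $n$-dimensional volumes across the decomposition (the $T_i$ have disjoint interiors and boundaries of measure zero) gives $L^n = k\,c^n N a^n$, hence
\[
\frac{ac}{L}=(kN)^{-1/n}=:\rho,
\]
with $\rho<1$ because $kN\geq 2$.

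Then comes the key step. Apply a similarity of $\mathbb{R}^n$ of ratio $a/L$ to the tiling $C=\bigcup_i T_i$: the image of $C$ is a cube of side $a$, and each $T_i$ maps to a set isometric to $(a/L)\,cR^n=\rho R^n$, still with pairwise disjoint interiors. Since any two cubes in $\mathbb{R}^n$ of the same side length are related by an isometry, it follows that \emph{every} cube of side length $a$ is a union of $k$ mutually isometric copies of $\rho R^n$ with pairwise disjoint interiors. Now I would perform this subdivision inside each of the $N$ atomic cubes constituting $R^n$. Distinct atomic cubes have disjoint interiors, so assembling the $N$ local subdivisions writes $R^n$ as a union of $kN$ copies of $\rho R^n$, pairwise isometric, each similar to $R^n$, with pairwise disjoint interiors. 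Finally, $R^n$, being a polycube, is a codimension-$0$ submanifold of $\mathbb{R}^n$ with nonempty interior, so all the requirements in the definition of a rep-tile are met; $R^n$ is a rep-tile of index $kN$.

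I do not anticipate a genuine obstacle here—the argument is essentially bookkeeping. The two points that need care are (i) justifying the common similarity ratio $c$, which is exactly the place where it matters that Definition~\ref{tiling.def} demands the $T_i$ be mutually isometric rather than merely each similar to $R^n$; and (ii) the degenerate single-cube case, which must be handled separately since the refinement would otherwise only produce the trivial index-$1$ ``decomposition.''
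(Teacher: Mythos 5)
Your proof is correct and takes essentially the same route as the paper's: identify each atomic cube of $R^n$ with $C$ (after rescaling), import the given tiling of $C$ into each atomic cube, and assemble. The paper states this in two sentences; your version merely fills in the details (the common ratio $c$, the index $kN$, and the degenerate single-cube case), all of which check out.
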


\begin{proof}
    By identifying each atomic cube in the polycube decomposition of $R$ with $C$, we can tile each cube in $R$ with a finite number of pairwise isometric manifolds, each of which is similar to $R$.  We have thus tiled $R$ by rescaled copies of $R$.
\end{proof}

In particular, a polycube that tiles the cube must have connected boundary, which follows from the following Lemma.

\begin{lemma}\label{lem:single-bdry}
    Let $X^n$ be a manifold which is homeomorphic to an $n$-dimensional rep-tile. Then $\partial(X)$ is non-empty and connected.
\end{lemma}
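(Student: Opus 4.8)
The plan is to prove both assertions---nonemptiness and connectedness of $\partial(X)$---by exploiting the self-similar decomposition $X = \bigcup_{i=1}^k X_i$ into rescaled isometric copies of $X$, together with the fact that $X$ is a compact codimension-0 submanifold of $\mathbb{R}^n$. First I would dispense with nonemptiness: if $\partial(X) = \emptyset$, then $X$ is a closed $n$-manifold, hence (being compact and nonempty) cannot embed in $\mathbb{R}^n$ with nonempty interior as a proper subset---more directly, a compact $n$-manifold without boundary embedded in $\mathbb{R}^n$ is open, hence clopen, hence all of $\mathbb{R}^n$, contradicting compactness. So $\partial(X) \neq \emptyset$, and we may write $X$ as a rep-tile with index $k \geq 2$.

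For connectedness, here is the approach. Suppose toward a contradiction that $\partial(X) = A \sqcup B$ with $A, B$ both nonempty, open, and closed in $\partial(X)$. The key point is to compare $\partial(X)$ with the boundary pieces coming from the $k$ scaled copies $X_i$. Since $\operatorname{int}(X_i)$ are pairwise disjoint and $\bigcup X_i = X$, a point of $\partial(X)$ lies on $\partial(X_i)$ for at least one $i$; and a point in the interior of $X$ that lies on some $\partial(X_i)$ must lie on $\partial(X_j)$ for at least one other $j$ (otherwise it would be an interior point of a single $X_i$ contained in $\operatorname{int}(X)$, fine, but a boundary point of $X_i$ in $\operatorname{int}(X)$ must be "matched" by another tile since the $X_i$ cover a neighborhood of it). I would set up the incidence structure: let $\Gamma$ be the graph whose vertices are the tiles $X_1,\dots,X_k$, with an edge between $X_i$ and $X_j$ when $\partial(X_i)\cap\partial(X_j)$ has nonempty interior in $\partial(X_i)$ (equivalently, they share an $(n-1)$-dimensional face). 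Because $X = \bigcup X_i$ is connected and each $X_i$ is connected with connected interior, standard arguments show $\Gamma$ is connected. Now each $X_i$ is homeomorphic to $X$, so by a downward induction hypothesis (or by iterating the rep-tile decomposition and using a minimal counterexample / scaling argument) each $\partial(X_i)$ has the same number of components as $\partial(X)$, namely at least $2$; but I want to derive that $\partial(X)$ is then forced to have a component structure inconsistent with being a single manifold piece.

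The cleanest route, which I would pursue, is a Mayer--Vietoris / homological argument rather than a purely point-set one. Work with $H_{n-1}(\,\cdot\,;\mathbb{Z}/2)$. A connected closed $(n-1)$-manifold $N$ embedded in $\mathbb{R}^n$ separates it (Alexander duality), and $H_{n-1}(N;\mathbb{Z}/2)\cong\mathbb{Z}/2$; if $\partial(X)$ has $c$ components then $H_{n-1}(\partial X;\mathbb{Z}/2)\cong(\mathbb{Z}/2)^c$. The long exact sequence of $(X,\partial X)$ together with $H_n(X;\mathbb{Z}/2)=0$ (as $X$ is a compact $n$-manifold with nonempty boundary, hence not closed) and Lefschetz duality $H_n(X,\partial X;\mathbb{Z}/2)\cong H^0(X;\mathbb{Z}/2)\cong\mathbb{Z}/2$ shows the map $H_{n-1}(\partial X)\to H_{n-1}(X)$ has a $1$-dimensional kernel, i.e. $\operatorname{rank} H_{n-1}(X;\mathbb{Z}/2) = c-1$. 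This is a homeomorphism invariant of $X$. Now decompose $X = \bigcup X_i$ and apply Mayer--Vietoris (iterated, or via the nerve, using that the pairwise intersections $X_i\cap X_j$ are $(n-1)$-complexes and triple intersections have dimension $\le n-2$): I would show that $\operatorname{rank} H_{n-1}(X;\mathbb{Z}/2)$ computed from the pieces must strictly exceed $c-1$ when $c\ge 2$, because each of the $k\ge 2$ scaled copies contributes its own "$c-1$" worth of $(n-1)$-cycles while the gluing along shared faces (which are $(n-1)$-balls or unions of balls, hence $H_{n-1}$-trivial or contributing controlledly) cannot kill enough of them---more precisely the internal walls and the connectedness of the incidence graph $\Gamma$ force $\operatorname{rank} H_{n-1}(X) \ge k(c-1) > c-1$ unless $c-1 = 0$. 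That contradiction forces $c = 1$.

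The main obstacle I anticipate is the Mayer--Vietoris bookkeeping in the last step: controlling $H_{n-1}$ and $H_{n-2}$ of the pairwise intersections $X_i \cap X_j$ and verifying that the internal separating walls genuinely inflate $\operatorname{rank} H_{n-1}(X)$ rather than being absorbed. One must be careful that $X_i \cap X_j$ need not be a single $(n-1)$-ball---it could be disconnected or have complicated topology in principle---so the honest version likely needs either a general-position/PL hypothesis (the statement says "manifold which is homeomorphic to a rep-tile", and rep-tiles here are polycubes or smooth, so one can assume nice intersections) or a more robust argument via Alexander duality applied directly in $\mathbb{R}^n$: the complement $\mathbb{R}^n \setminus \operatorname{int}(X)$ is connected (it's unbounded and $X$ is a "nice" compact set), so $\partial X$, which is the frontier of both $X$ and its complement, cannot separate $\mathbb{R}^n$ into more than the two pieces $\operatorname{int}(X)$ and $\mathbb{R}^n\setminus X$; combined with each component of $\partial X$ being a closed $(n-1)$-manifold that itself separates, one concludes there is exactly one such component. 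I would present the Alexander-duality version as the primary argument, with the scaling/self-similarity only invoked (if at all) to guarantee that $X$ has the regularity needed for $\partial X$ to be a manifold in the first place.
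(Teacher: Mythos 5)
The non-emptiness half of your argument is fine (invariance of domain plus compactness), and your Lefschetz-duality computation $\operatorname{rank} H_{n-1}(X;\mathbb{Z}/2)=c-1$ is correct for a compact codimension-0 submanifold of $\mathbb{R}^n$ with $c$ boundary components. The problem is that neither of your two routes to connectedness actually closes. The route you designate as ``primary'' asserts that $\mathbb{R}^n\setminus \operatorname{int}(X)$ is connected ``because it is unbounded and $X$ is a nice compact set,'' and you explicitly relegate the self-similar structure to a regularity role. This cannot work: the connectedness of the complement is exactly equivalent to what you are trying to prove, and it is \emph{false} for general nice compact connected codimension-0 submanifolds --- a round shell $\{1\le |x|\le 2\}\subset\mathbb{R}^n$ is smooth, compact, connected, has non-empty interior, and has two boundary components and a bounded complementary cavity. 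So any proof must use the rep-tile decomposition essentially, not merely for regularity.

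Your Mayer--Vietoris route does try to use the decomposition, but its key step --- that the $k\ge 2$ pieces force $\operatorname{rank} H_{n-1}(X)\ge k(c-1)$ --- is unjustified and is in fact false for decompositions into pieces that are merely \emph{homeomorphic} to $X$: write the shell $S^{n-1}\times[0,2]$ as the union of the two concentric shells $S^{n-1}\times[0,1]$ and $S^{n-1}\times[1,2]$; each piece is homeomorphic to the whole, the interiors are disjoint, each has $c-1=1$, yet the union again has $\operatorname{rank} H_{n-1}=1$, not $2$. The classes get identified through the shared interior wall, whose $H_{n-1}$ is nontrivial --- precisely the bookkeeping you flag as an ``obstacle.'' What rules out the concentric-shell picture for an actual rep-tile is the \emph{metric} self-similarity (the pieces are isometric rescalings of $X$ by a common factor $\lambda$ with $k\lambda^n=1$, which controls volumes and diameters), and your argument never invokes it; any purely homological/homotopy-theoretic argument is doomed for the reason above. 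For comparison, the paper does not reprove this at all: it cites Theorem~4.2 of \cite{blair2021three}, whose argument exploits the similarity structure (cavities of the subtiles and a volume/iteration argument) in a way that has no analogue in your write-up. To repair your proof you would need to inject the scaling data --- for instance by analyzing how the bounded complementary components (cavities) of the subtiles can be filled by other subtiles and deriving a contradiction from volume counting under iterated subdivision --- rather than relying on $H_{n-1}$ alone.
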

\begin{proof}
    Since $X^n$ is a homeomorphic to a rep-tile, we have that $X^n$ embeds in $\mathbb{R}^n$. Hence, $\partial(X)\neq\emptyset$. The proof that $\partial(X)$ is connected when $n=3$ is given in~\cite[Theorem~4.2]{blair2021three} and works without modification in all dimensions. 
\end{proof}

The following proposition is a variant of the well-known fact that smooth manifolds can be approximated by PL manifolds.

\begin{proposition}\label{Prop:Smooth_to_poly}
    Let $R\subset\mathbb{R}^n$ be a compact smooth $n$-manifold. Then, $R$ is topologically isotopic to a $n$-dimensional polycube. 
\end{proposition}

\begin{proof}
   Recall the elementary measure theory result that every open subset $\mathcal{O}$ of $\mathbb{R}^n$ can be written as a
countable union of closed $n$-cubes with disjoint interiors. In particular, there is a sequence of $n$-dimensional polycubes $P_1 \subset P_2 \subset P_3 ... \subset \mathcal{O}$ which limit to $\mathcal{O}$ with the property that $P_i$ is a union of $n$-cubes of side-length $(\frac{1}{2})^{i-1}$. See Theorem 1.4 of \cite{stein2009real} for details regarding the construction of the $P_i$. Let $\mathcal{O}=int(R)$. Since $R$ is compact, each of the $P_i$ are a union of finitely many cubes. When $i$ is sufficiently large, one can use the fact that $\partial R$ is smooth to build a topological isotopy from $P_i$ to $R$. We omit the details of this argument since they are elementary and somewhat lengthy.
\end{proof}

We recall our main theorem below. 
\begin{customtheorem}{\ref{thm:main}}
Let $R\subset \mathbb{R}^n$ be a compact smooth $n$-manifold with connected boundary. Then, $R$ is topologically isotopic to a rep-tile. 
\end{customtheorem}

Our main theorem is a consequence of the following.

\begin{theorem}\label{tile_cube.thm}
    Let $R\subset\mathbb{R}^n$ be a compact smooth $n$-manifold with connected boundary. Then, $R$ is topologically isotopic to a $n$-dimensional polycube $R^*$ which $2^n$-tiles a cube. 
\end{theorem}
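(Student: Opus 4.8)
The plan is to first reduce $R$ to a polycube by a standard "pixelation" isotopy, then enlarge it inside a big cube, and finally perform a carefully orchestrated collection of cube-swaps — generalizing the ball swap of Remark~\ref{rem:ball-number} — to fill out the cube up to a factor of $2^n$. First I would take $R^n\subset\mathbb{R}^n$ and, after a small ambient isotopy, perturb $\partial R$ so that it is transverse to a sufficiently fine cubic lattice $\mathcal{Z}^n_\lambda$; taking the union of all atomic cubes meeting $R$ (or, more carefully, a "majority vote" version to avoid creating pinch points) produces a polycube $R'$ isotopic to $R$, still with connected boundary. Here I would want to invoke the smoothness hypothesis and a transversality/general-position argument to guarantee that for $\lambda$ small enough the resulting polycube is genuinely a submanifold homeomorphic to $R$ and isotopic to it via an isotopy supported near $\partial R$; this is the kind of technical point that is easy to believe but slightly fiddly to state, since naively unioning cubes can create non-manifold points where the boundary is nearly tangent to a coordinate hyperplane.

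Next I would place $R'$ inside a large cube $C_0=[0,N]^n$ (rescaling the lattice so $R'$ is a union of unit cubes, $N$ a power of $2$ chosen large relative to the number of cubes in $R'$ and with room to spare), with $R'$ tucked into a corner. The goal is to realize $C_0$ as a disjoint union (up to boundary) of $2^n$ isometric copies of a polycube isotopic to $R'$. The model to keep in mind is the group $G\cong(\mathbb{Z}/2)^n$ acting on the cube by the $n$ commuting reflections in the coordinate-bisecting hyperplanes; this action has $2^n$ fundamental-domain "quadrants," and $R'$ sits in one of them with lots of empty space around it. The idea is: start from $R'\cup g_1 R'\cup\cdots$, the $2^n$ reflected copies, which are pairwise interior-disjoint but do not fill $C_0$; then, as in the ball-swap remark, move unit cubes of $C_0$ around within their $G$-orbits. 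Moving a cube $c$ of $C_0$ to the position $g(c)$ for $g\in G$ simultaneously in all $2^n$ translated/reflected copies preserves the property that the $2^n$ copies are interior-disjoint (each orbit $\{g(c):g\in G\}$ is hit exactly once per copy, by the same combinatorial counting argument as in Remark~\ref{rem:ball-number}), while changing the isometry type of the tile. So the plan is to choose, for each empty unit cube of $C_0$ not currently covered, a $G$-translate of it that lies in the "native" quadrant's leftover space and attach it to $R'$ — i.e., grow $R'$ by pulling in cubes from the other quadrants until the $2^n$ enlarged copies exactly tile $C_0$.

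The main obstacle — and, as the authors flag in Remark~\ref{rem:ball-number}, "the crux" — is doing this growth while keeping the tile a manifold homeomorphic (indeed isotopic) to $R$ at every stage. Attaching an arbitrary cube to a polycube can change its topology (create handles, disconnect the boundary, or spawn non-manifold points), so one cannot simply absorb cubes greedily. The resolution I would pursue is to absorb cubes one at a time along a path: maintain a "frontier" of the current tile $R_k$ that is a thickened ball, and only ever attach a new cube across a free facet so that $R_{k+1}$ is obtained from $R_k$ by an isotopy (a cube attached along one facet is an elementary expansion, which does not change homeomorphism type and can be realized by an ambient isotopy pushing the boundary out across that facet). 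One must then verify that the empty region of $C_0$, together with the orbit structure, is "path-connected enough" that every empty cube can be reached and absorbed in such a facet-by-facet manner without ever being forced to attach along two or more facets at once; choosing $N$ large and routing through generic interior cubes (away from the boundary hyperplanes of $G$, so orbits are free and distinct) should give enough room to always find such a path. The bookkeeping of individual cube orbits — ensuring that the cube one steals for quadrant $j$ is not simultaneously needed, in a conflicting position, for quadrant $j'$ — is exactly the "more intricate version of the ball swap" the authors promise, and I expect the bulk of the proof to be the combinatorial lemma that such a conflict-free absorption schedule exists. Once the schedule is carried out, $R^*:=R_{\text{final}}$ is a polycube isotopic to $R$ whose $2^n$ $G$-copies tile $C_0$, so by Lemma~\ref{mantra} it is a rep-tile, and Theorem~\ref{thm:main} follows.
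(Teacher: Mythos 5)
Your overall frame---realize the big cube as the orbit of a subcube under a group of isometries, start from interior-disjoint copies of $R$, and redistribute the leftover material within group orbits so that the enlarged copies tile---matches the paper's strategy in spirit. But the step you explicitly defer (``the combinatorial lemma that such a conflict-free absorption schedule exists'') is a genuine gap, and the paper does not close it the way you propose. Absorbing the leftover space one unit cube at a time, each attached along a single free facet, would require showing that one representative per $G$-orbit of the uncovered cubes can be exhausted by elementary expansions of the growing tile. Since $\overline{C\setminus R}$ is an arbitrary compact $n$-manifold with boundary, there is no reason a greedy or path-following schedule avoids eventually attaching a cube whose intersection with the current tile is disconnected (which adds a handle and destroys the isotopy class); no such scheduling lemma is proved in your proposal, and it is not clearly true.

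The paper's resolution is different in kind: it never absorbs individual cubes. It invokes the ball-covering theorem of Kobayashi--Tsukui (Theorem~\ref{theorem:ballnumberboundary}) to write $\overline{C^n\setminus R}$ as a union of at most $n$ balls $B_1,\dots,B_n$ with disjoint interiors, then uses Lemmas~\ref{connected_bdry_v2.lem} and~\ref{lem:ClawsExist} to isotope this decomposition into a ``taloned pattern'' in which each $B_i$ meets exactly one designated facet $F_i$ of $C^n$ in a small $(n-1)$-ball, positioned so that its image under the relevant rotation lands inside $\partial R$ away from the other balls. The tile is then $R^*=R\cup\bigcup_k\bigl(r_k^{-1}(B_{2k-1})\cup r_k(B_{2k})\bigr)$: a boundary connected sum of $R$ with $n$ balls, each glued along a single $(n-1)$-ball, hence isotopic to $R$ with no scheduling required; that the orbit of $R^*$ tiles $\boxplus$ follows from coverage plus a volume count. (Two smaller discrepancies: the paper uses order-$4$ rotations generating $(\mathbb{Z}_4)^{n/2}$, with an extra flip in odd dimensions, rather than your reflection group $(\mathbb{Z}/2)^n$; and it cubifies at the very end, after all topological manipulations, rather than pixelating $R$ at the outset---your own caveat about pinch points is handled there by taking a regular neighborhood of the boundary complex $W$ and reassigning its cubes.) To repair your argument you would need, at minimum, the ball-number input; once you have it, the cube-by-cube absorption becomes unnecessary.
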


A key step in the proof that any $R\subseteq \mathbb{R}^n$ satisfying the hypotheses of Theorem~\ref{thm:main} is isotopic to a rep-tile is to decompose $\overline{C^n\backslash R}$, the closure of the complement of $R$ in an $n$-cube, into a union of closed $n$-balls with non-overlapping interiors. Given a manifold $X^n$, the smallest number of $n$-balls in such a decomposition of $X$ is called the {\it ball number} of $X$, denoted $b(X)$. Upper bounds on the ball number of a manifold in terms of its algebraic topology have been found by Zeeman~\cite{zeeman1963seminar} and others~\cite{luft1969covering, KT76, singhof1979minimal}. 
 We rely on the following.

\begin{theorem}\label{theorem:ballnumberboundary}[2.11 of \cite{KT76}]
    Let $M^{n}$ be a connected compact PL $n$-manifold with non-empty boundary. Then $b(M)\leq n$. 
\end{theorem}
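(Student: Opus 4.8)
\textbf{Proof proposal for Theorem~\ref{theorem:ballnumberboundary}.} This is a classical fact, quoted here verbatim from \cite{KT76}; we only outline the kind of argument that establishes it, taking $M$ connected (for disconnected $M$ one sums the bounds over components). The starting point is that, because $\partial M\neq\emptyset$, the manifold $M$ admits a handle decomposition, built from the empty set, with a single $0$-handle and no handle of index $n$: equivalently a self-indexing Morse function $f\colon M\to[0,\infty)$ with one local minimum, no interior local maximum, and $\partial M$ concentrated at the top. (The absence of $n$-handles reflects $\partial M\neq\emptyset$: an interior maximum can be traded for a boundary maximum or cancelled, and extra $0$-handles are cancelled against $1$-handles using connectedness.)

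\textbf{Construction of the $n$ balls.} The plan is to produce balls $B_0,B_1,\dots,B_{n-1}$ with pairwise disjoint interiors and $M=\bigcup_{k=0}^{n-1}B_k$, one for each possible handle index. Set $N_{-1}=\emptyset$, and inductively suppose $N_{k-1}$ is a regular neighborhood of the union of all handles of index at most $k-1$, already written as a union of at most $k$ balls $B_0,\dots,B_{k-1}$ with disjoint interiors. The handles of index $k$ are attached to $\partial N_{k-1}$ along a disjoint collection of tubular spheres $S^{k-1}\times D^{n-k}$. Since $k-1\le n-2$, these attaching regions have codimension at least $1$ in the $(n-1)$-manifold $\partial N_{k-1}$, so after a general position isotopy they can be joined by pairwise disjoint embedded arcs in $\partial N_{k-1}$, forming a connected spanning pattern. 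Thickening this pattern together with a collar to a ball $E\subset\partial N_{k-1}\times[0,\varepsilon]$ meeting each attaching region in a face, one sets $B_k:=E\cup(\text{all index-}k\text{ handles})$. Then $B_k$ is a boundary-connected sum of $n$-balls (the handles, each an $n$-ball, glued to $E$ along disjoint faces), hence an $n$-ball; replacing $N_{k-1}$ by $N_{k-1}$ with the collar sliver under $E$ deleted changes nothing up to isotopy, and one sets $N_k:=N_{k-1}\cup B_k$. After $n$ steps $N_{n-1}=M$ is covered by $B_0,\dots,B_{n-1}$ (some possibly empty), giving $b(M)\le n$.

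\textbf{Main obstacle.} No single hard theorem is required; the work — and the reason we are content to quote \cite{KT76} — lies in the bookkeeping of the inductive step. One must choose the spanning arcs, and hence the chamber $E$, in sufficiently general position that $E$ meets each previously built ball $B_j$ in a ball, that deleting the collar sliver under $E$ neither disconnects any $B_j$ nor alters its homeomorphism type, and that $E$ avoids the finitely many regions of $\partial N_{k-1}$ where handles of index greater than $k$ will subsequently be attached. Arranging all of this simultaneously is the crux; it is available precisely because every relevant dimension is at most $n-1$, so general position applies whenever $n\ge 2$, while the case $n=1$ (an interval) is immediate.
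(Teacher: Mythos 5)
The paper does not prove this statement: it is imported wholesale as Theorem 2.11 of \cite{KT76}, so there is no in-paper argument to compare yours against. Your sketch is the standard route to such bounds (it is essentially the Zeeman--Luft--Kobayashi--Tsukui strategy): take a handle decomposition with one $0$-handle and no $n$-handles, and amalgamate all handles of a given index into a single ball by tubing them together along the boundary of the previous stage, so that the $n$ possible indices $0,\dots,n-1$ yield $n$ balls with disjoint interiors. That is an accurate picture of how the cited result is obtained, and quoting it rather than reproving it is exactly what the paper does.

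Two points in your outline deserve flagging. First, a small imprecision: the attaching region $S^{k-1}\times D^{n-k}$ of a $k$-handle has dimension $n-1$, hence codimension $0$ in $\partial N_{k-1}$; it is the attaching \emph{sphere} $S^{k-1}\times\{0\}$ that has codimension at least $1$ there, and it is the complement of the attaching regions being connected (not their codimension) that lets you run the joining arcs. Second, and more seriously, the step you wave at as ``bookkeeping'' hides the one genuine difficulty: the connecting arcs must be routed \emph{along} $\partial N_{k-1}$ (so that the excised tube is a boundary blister of each earlier ball $B_j$ it meets, and $B_j$ minus the blister is still a ball); an arc drilled through the interior of a ball with both endpoints on its boundary produces a solid-torus-like complement, not a ball. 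This forces you to address what happens when $\partial N_{k-1}$ is disconnected and index-$k$ handles attach to different components, which your sketch does not. This is precisely the kind of case analysis that \cite{KT76} supplies and that justifies citing them rather than asserting the argument is routine.
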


\begin{figure}[htbp]
    \centering
    \includegraphics[width=1.5in]{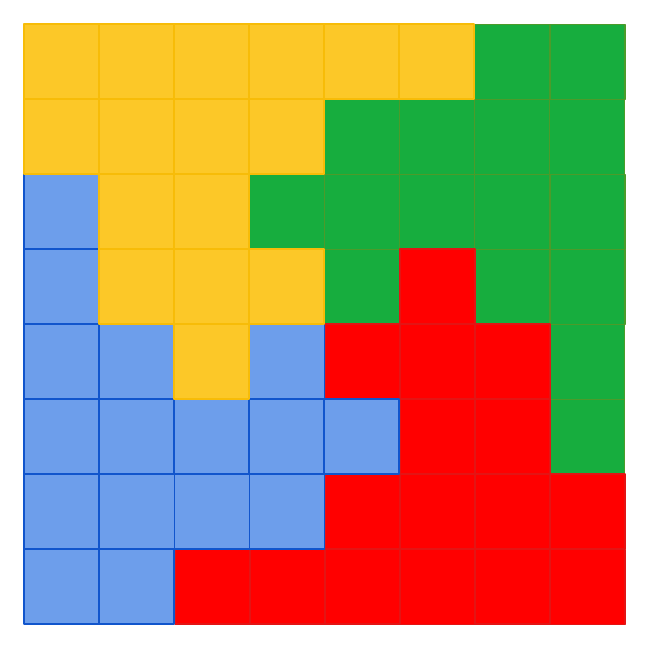}
    \caption{The green topological disk $R^*$, a rep-tile constructed from the top right $4\times 4$ square by cube swapping, tiles the $8\times 8$ square (which we may regard as a subdividison of $[-1,1]\times [-1,1]$). }
    \label{fig:2dschematic}
\end{figure}

\subsection{Overview of the proof of Theorem~\ref{thm:main}}

The main ingredient is Theorem ~\ref{tile_cube.thm}, which we prove using a strategy we refer to as a {\it cube swap}.  To start, $R$ is smoothly embedded in $C^n=[0,1]^n$ so that $R\cap\partial C^n =\emptyset$. In turn, the unit cube $C^n$ sits inside the cube $\boxplus=[-1,1]^n$. Since $R$ is disjoint from $\partial C^n$ and has a single boundary component, $C^n\setminus R$ is connected. By Theorem ~\ref{theorem:ballnumberboundary}, we may decompose $\overline{C^n\setminus R}$ into $n$ $n$-dimensional balls $B_1,\dots, B_n$.\footnote{If $b(C^n\setminus R)<n$, one could use fewer balls here and tile the cube with fewer copies of $R$, but we use $n$ balls for simplicity in the proof of the main theorem.}   After a homotopy of $C^n$ which restricts to an isotopy on each piece of the decomposition $\{R,B_1,\dots, B_n\}$ of $C^n$, we ensure that the pieces of this decomposition intersect an $(n-1)$-disk on $\partial{C}^n$ as shown in Figure~\ref{fig:claw}, in what we call a {\it taloned pattern}. The defining features of taloned patterns include: there is an $(n-1)$-disk on $\partial C^n$ such that $R$ and each of $B_1,\dots B_n$ intersect that disk in an $(n-1)$-ball and intersect the boundary of the disk in an $(n-2)$-ball; the $(n-1)$-balls $B_i$ are disjoint inside this disk; and $R$ is adjacent to each ball $B_i$ in this disk. (See Section~\ref{sec:talons} for the formal definition.) The homotopy used to create the taloned pattern is achieved in Lemmas~\ref{connected_bdry_v2.lem} and~\ref{lem:ClawsExist} below.  We then isotope $C^n$ so that the $(n-1)$-disk which constitutes the taloned pattern of Figure~\ref{fig:claw} is identified with the union of faces of $C^n=[0,1]^n$ whose interiors lie in the interior of $\boxplus=[-1,1]^n$, with certain additional restrictions. These restrictions guarantee that certain rotated copies of the $B_i$ contained in cubes adjacent to $[0,1]^n$ in $\boxplus=[-1,1]^n$ are disjoint, allowing us to form the boundary connected sum of $R$ with these balls without changing the isotopy class of $R$.  Indeed, we give a family of rotations $r_k$, $1\leq k \leq \lfloor{n/2}\rfloor$, together with one additional rotation $f$ if $n$ is odd, such that the orbit of $C^n$ under these rotations tiles $\boxplus$.  By taking the boundary sum of $R\subset C^n$ with the image of each $B_i$ under an appropriate choice of rotation above, we obtain the desired manifold $R^*$. By construction, $R^*$ is isotopic to $R$ and, moreover, the orbit of $R^*$ under the above set of rotations gives a tiling of $\boxplus$.  A 2-dimensional tile $R^*$ created via cube swapping is shown in Figure~\ref{fig:2dschematic}. An example $R^*$ in dimension $n=3$ is shown in Figure~\ref{fig:3d-bw-reptile}, and the tiling of a cube by tiles isometric to $R^*$ is illustrated in Figure~\ref{fig:front-view-reptiles}. Finally, we show that this construction can be ``cubified", so that $R^*$ is a polycube tiling $\boxplus$, completing the proof of Theorem~\ref{tile_cube.thm}. Once this is established, Theorem~\ref{thm:main} follows from Lemma~\ref{mantra}.

\subsection{Taloned Patterns}\label{sec:talons}
We define the desired boundary pattern described above. A {\it $k$-claw} is a tree which consists of one central vertex $v$ and $k$ leaves, each connected to $v$ by a single edge. See Figure~\ref{fig:claw}.

\begin{figure}[htbp]
    \centering
    \includegraphics[width=2in]{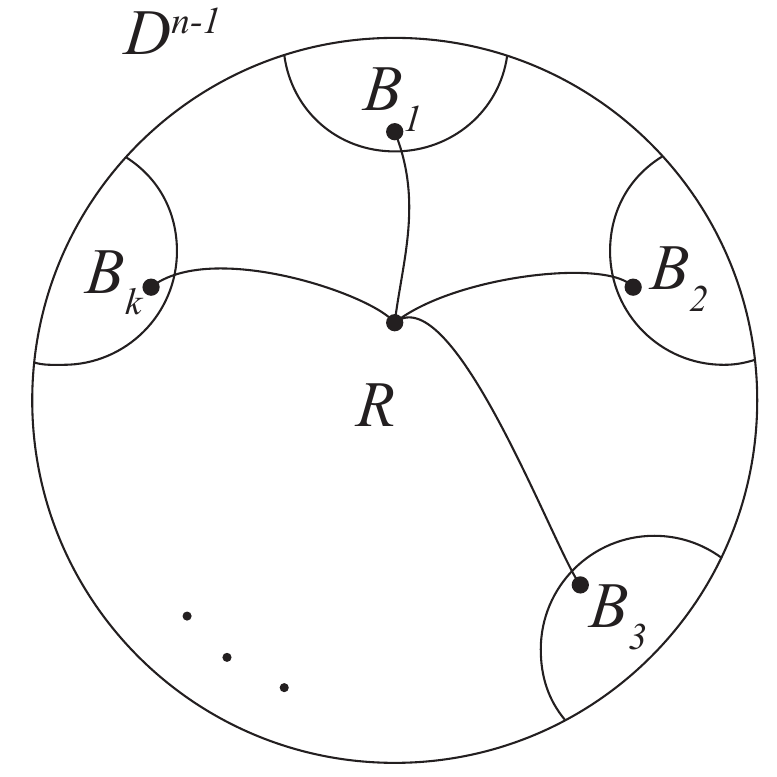}
    \caption{Taloned boundary pattern corresponding to a $k$-claw in $\partial C^n$. }
    \label{fig:claw}
\end{figure}

Our goal is to construct a boundary pattern on $C^n$ such that there exists an embedded disk $D^{n-1}\subset \partial C^n$ with the following properties:

\begin{itemize}
    \item $D^{n-1}\cap B_i$ is a single $(n-1)$-disk, for all $1\leq i \leq k$;
    \item $(D^{n-1}\cap B_i)\cap \partial D^{n-1}$ is an $(n-2)$-disk, for all $1\leq i \leq k$;
    \item $D^{n-1}\setminus (\cup_{i=1}^k B_i\cap D^{n-1})\subset R$.
    \item $B_i\cap B_j \cap D^{n-1}=\emptyset$ for $i\neq j$.
\end{itemize}

We regard the boundary pattern as the regular neighborhood of a $k$-claw, with the following decomposition: $R$ contains a neighborhood of the central vertex; and each $B_i$ containing a neighborhood of a leaf. See Figure~\ref{fig:claw}. We call this a {\it taloned pattern} of intersections.

We begin by proving Lemma~\ref{connected_bdry_v2.lem}, which ensures that, in the interior of $C^n$, the union of the boundaries of the pieces $\{R, B_1,\dots, B_n\}$ in the interior of our decomposition of $C^n$ can be assumed to be connected.

\begin{lemma}Let $R$ be a compact $n$-manifold with a single boundary component embedded in the $n$-cube $C^n$ such that $C^n=R \cup B_1\cup \dots \cup B_k$, where each $B_i$ is an $n$-ball, and such that the interiors of $R$ and the $B_i$ are pairwise disjoint.  Then after a homotopy of $C^n$ which restricts to isotopies on the interiors of $R$ and the $B_i$,  $W=\overline{\left(\partial  R\cup \partial B_1\cup \dots \cup \partial B_k\right)\setminus \partial C^n}$ is a connected $(n-1)$-complex.\label{connected_bdry_v2.lem} 
\end{lemma}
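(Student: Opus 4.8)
The plan is to first put the decomposition in general position, and then to induct on the number $m$ of connected components of $W$, reducing $m$ at each step by a homotopy supported near a single $(n-1)$‑disk in $\partial C^n$. First I would apply a small ambient isotopy of $C^n$ so that $\partial R,\partial B_1,\dots,\partial B_k$ are in mutual general position and transverse to $\partial C^n$; then $\widehat W:=\partial R\cup\bigcup_i\partial B_i$ is an $(n-1)$‑dimensional polyhedron, $W=\overline{\widehat W\setminus\partial C^n}$ is an $(n-1)$‑complex, and the pieces $\{R,B_1,\dots,B_k\}$ meet in a cell‑like pattern whose top cells (the \emph{walls}) are the components of $\partial P\cap\partial Q\cap\operatorname{int}C^n$ over pairs of pieces $P,Q$; I keep track, for each wall, of the pair of pieces it separates.

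The induction rests on two combinatorial observations. (i) The adjacency graph on $\{R,B_1,\dots,B_k\}$ — join two pieces when they share a wall — is connected: a partition of the pieces into two nonempty sets sharing no wall would write $C^n$ as a union $\overline{\bigcup_A P}\cup\overline{\bigcup_B P}$ of two closed sets meeting in a set $S$ of dimension $\le n-2$, and since $\operatorname{int}C^n\setminus S$ is connected this is impossible. (ii) If $W$ is disconnected then some piece $P$ has a wall lying in one component $W_1$ of $W$ and a wall lying in a different component $W_2$; otherwise the components of $W$ would partition the pieces with no wall between distinct blocks, contradicting (i).

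For the inductive step, assume $m\ge 2$ and pick $P,W_1,W_2$ as in (ii). Since $\partial P$ is connected but the walls of $P$ in $W_1$ are not adjacent on $\partial P$ to those in $W_2$, there is a cell $\delta\subset\partial C^n$ of the cell structure on $\partial P$ adjacent (on $\partial P$) to a wall $w_1\subset W_1$ of $P$ and to a wall $w_2$ of $P$ lying in a component of $W$ other than $W_1$; shrinking, take $\delta$ to be an $(n-1)$‑disk. Let $Q$ be the piece on the far side of $w_1$ and $Z$ the piece on the far side of $w_2$. Now perform the homotopy of $C^n$ that removes from $P$ the thin collar $\delta\times[0,\epsilon]$ of $\partial C^n$ at $\delta$, cuts this collar by the disk $\sigma:=\alpha\times[0,\epsilon]$ — where $\alpha\subset\delta$ is an arc separating the $w_1$‑side of $\partial\delta$ from the $w_2$‑side — into two half‑collars, and adjoins one half‑collar to $Q$ and the other to $Z$. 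Then $P$ only loses a dent (a ball attached to it along one disk), while $Q$ and $Z$ each gain a ball along a single disk of its boundary, so all of $R$ and the $B_i$ are carried by isotopies; and in the new decomposition $\sigma$ is a wall between $Q$ and $Z$ that is adjacent to both $w_1$ and $w_2$, so the images of $W_1$ and $W_2$ now lie in a common component of $W$. Hence $m$ strictly decreases, and iterating yields $W$ connected. (The "arch" configuration — $P$ a ball straddling another piece and touching $\partial C^n$ in two disks — is the model case: there $Q=R$, $Z$ the straddled ball, and splitting the collar joins $\partial R$'s component to the otherwise isolated under‑wall.)

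The step I expect to be the main obstacle is verifying that this collar‑splitting really restricts to isotopies on the pieces — i.e. that each half‑collar meets the interior of $Q$ (resp. $Z$) in a single disk, so that adjoining it creates a dent rather than a handle, and dually that $P$ loses only a dent. This is a purely local matter in a neighbourhood of the $(n-2)$‑stratum where $w_1$, $w_2$ and $\delta$ meet, but making the local model and the re‑gluing precise in all dimensions, and checking that a disk $\delta$ as required always exists (when $w_1,w_2$ are not separated by a single $\partial C^n$‑face one first connects $w_1$ across an intermediate wall and iterates), is the technical heart of the argument; it is also the point where the statement genuinely asks for a \emph{homotopy} of $C^n$ rather than an ambient isotopy, since effecting the reattachment cleanly may require folding part of $\partial C^n$ non‑injectively onto the collar.
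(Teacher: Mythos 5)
Your strategy is essentially the paper's: both arguments connect up $W$ by finger moves near $\partial C^n$, transferring a thin collar of a region of $\partial P\cap\partial C^n$ from one piece to its neighbor(s), and both rest on the fact that every piece has connected boundary, so $W$ can only be disconnected by $\partial C^n$. The difference is bookkeeping: the paper partitions the pieces into layers by distance from $\partial C^n$, chooses a minimal system of disjoint arcs on $\partial C^n$ (each running through a single piece) joining the components of the outermost layer's interior boundary, does all the finger moves at once, and gets the inner layers for free since each $\partial L$ is connected; you instead induct on the number of components of $W$, one local move per step, with your observations (i)--(ii) playing the role of the layering and your disk $\delta$ playing the role of a (very short) arc $\alpha_i$. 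Your existence argument for $\delta$ does go through: distinct components of $\partial P\cap\partial C^n$ are never adjacent, so the first transition out of $W_1$ along a path in $\partial P$ crosses a single such region.

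There is, however, one genuine gap in your local move: you tacitly assume $Q\neq Z$. If the pieces across $w_1$ and $w_2$ coincide, the two half-collars are both attached to $Q$ along two disjoint $(n-1)$-disks $d_1\subset w_1$ and $d_2\subset w_2$, so $Q$ gains a $1$-handle rather than a dent; this is not an isotopy of $Q$ (for $Q$ a ball it produces $S^1\times D^{n-1}$), and the lemma's conclusion about restricting to isotopies fails. Your induction offers no escape, since selecting $Q$ as the piece in observation (ii) just reflects the same configuration back with the roles of $P$ and $Q$ exchanged, and routing a longer finger through several pieces runs into the identical problem at the tip. You would need either to prove $Q\neq Z$ can always be arranged (perhaps by choosing $w_1$, $w_2$, or the base piece of the finger more carefully among all walls of $W_1$ and $W_2$) or to supply a different move for the coincident case. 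To be fair, the paper's own finger moves have the analogous degenerate case $B(\alpha_i)_0=B(\alpha_i)_1$, which its minimality conditions do not visibly exclude either; but since your write-up isolates exactly this attachment step as the crux, it is the point that must be closed before the argument is complete.
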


\begin{proof}

Let $\mathcal{B}=\{R, B_1, \dots , B_k\}$.  We partition $\mathcal{B}$ into {\it layers} $\mathcal{L}_i$ as follows (see Figure~\ref{fig:connecting_W}). Define the first layer as $\mathcal{L}_1=\{L\in \mathcal{B}~|~\partial L\cap \partial C^n \neq \emptyset\}$. We will use the notation $\partial \mathcal{L}_1:=\bigcup_{L\in \mathcal{L}_1} \partial L$.   Next choose a minimal collection of disjoint, embedded paths $\alpha_1,\dots \alpha_l$ on $\partial C^n$ such that \begin{itemize}\item 
$\left( \overline{\partial \mathcal{L}_1\setminus \partial C^n} \right)\cup \alpha_1\cup\dots \cup \alpha_l$ is connected,
\item the interior of $\alpha_i$ is contained in a single element $B(\alpha_i)$ of $\mathcal{B}$; and 
\item no $\alpha_i$ has both endpoints on the same connected component of  $\overline{\partial \mathcal{L}_1\setminus \partial C^n}$.

\end{itemize}  
\begin{figure}
    \centering
    \includegraphics[width=\textwidth]{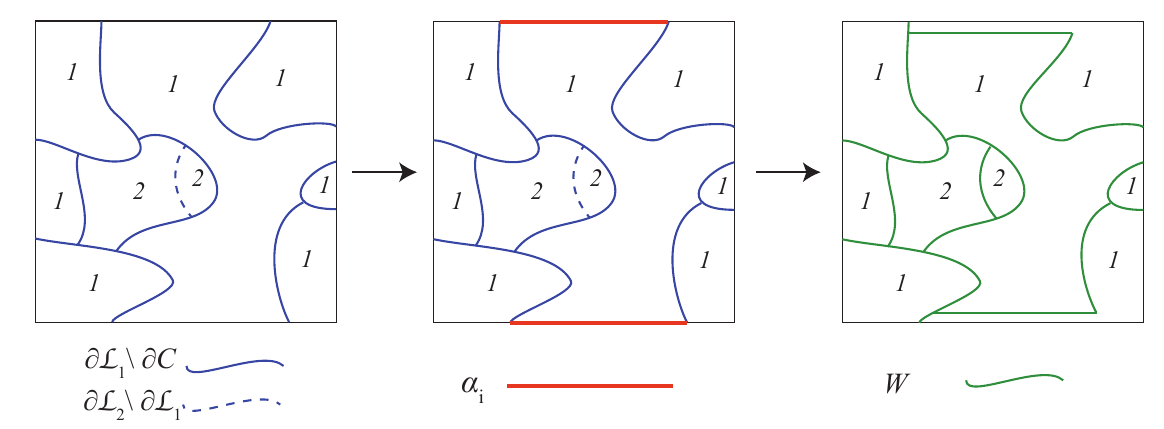}
    \caption{(Left) A partition of the decomposition $\{R,B_1,\dots,B_k\}$ of $C^n$ into layers, with the number in reach region indicating its level; (Middle) A choice of paths $\alpha_i$  on $\partial C^n$ such that after performing finger moves along the $\alpha_i$, $W=\overline{\left(\partial  R\cup \partial B_1\cup \dots \cup \partial B_k\right)\setminus \partial C^n}$ is connected (Right). } 
    \label{fig:connecting_W}
\end{figure}
\begin{figure}
\centering
\includegraphics[width=\textwidth]{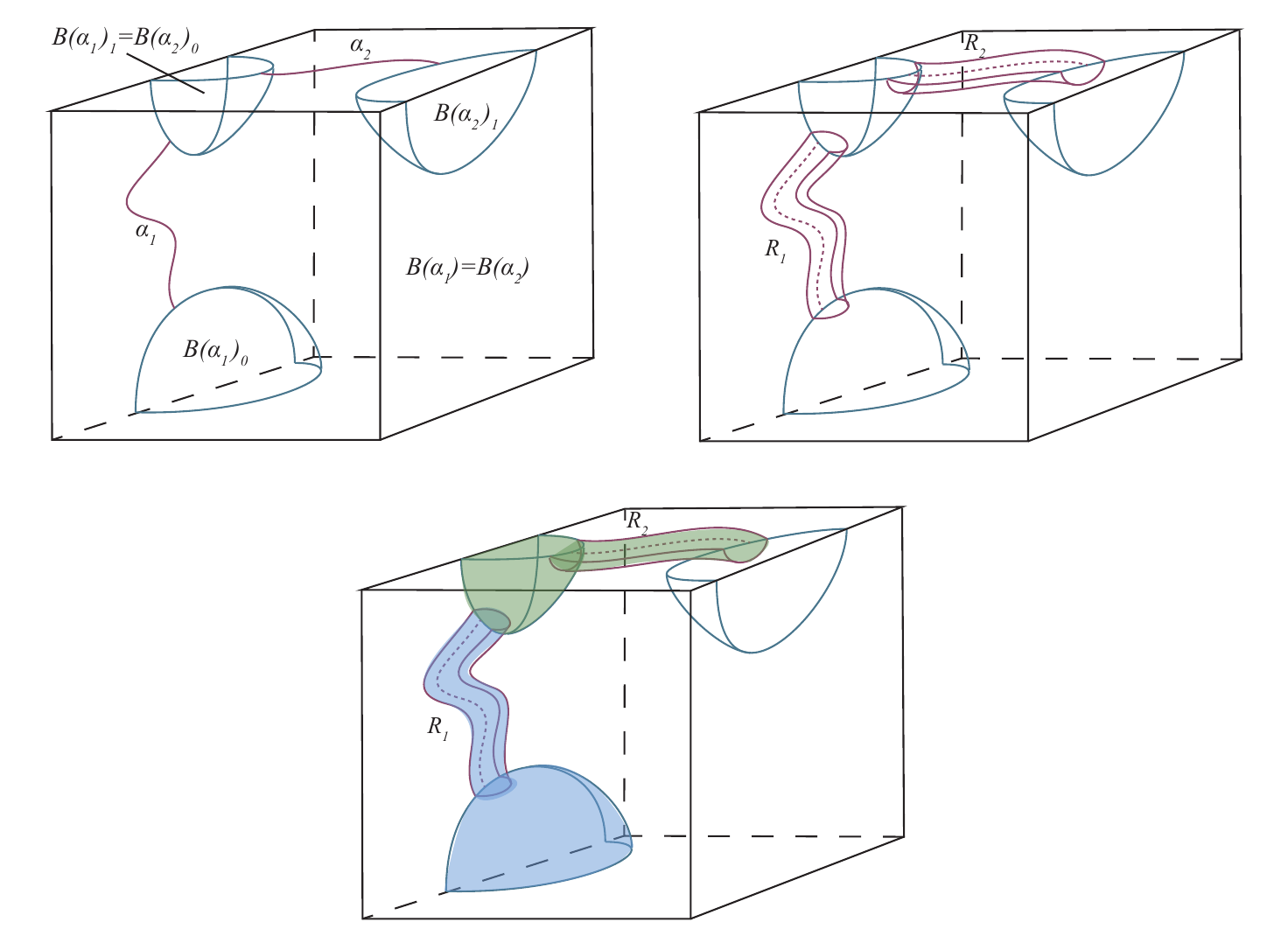}
\caption{Performing finger moves to ensure $W=\overline{\left(\partial  R \cup \partial B_1\cup \dots \cup \partial B_k\right)\setminus \partial C^n}$ is a connected $(n-1)$-complex. }
\label{fig:3d_finger_moves}
\end{figure}

Note that any given element of the decomposition $\mathcal{B}$ may contain the interior of more than one of the paths $\alpha_i$, i.e., it is possible to have $B(\alpha_i)=B(\alpha_j)$ for $i\neq j$. For each $A\in\mathcal{B}$, we let $P(A)$ denote the set of all $i$ such that $A=B(\alpha_i)$.

Since the $\alpha_i$ are disjoint, for each $1\leq i \leq l$, we can choose a disjoint regular neighborhood $R_i$ in $B(\alpha_i)$ of $\alpha_i$ such that $R_i$ intersects the boundary of exactly two other elements $B(\alpha_i)_0$ and $B(\alpha_i)_1$ of $\mathcal{B}$, one at each of the endpoints $\alpha_i(0)$ and $\alpha_i(1)$, respectively.  For each $A\in \mathcal{B}$, let $P_0(A)$ denote the set of all $i$ such that $A=B(\alpha_i)_0$. Next, modify the decomposition $\mathcal{B}$ of $C^n$ as follows (see Figure~\ref{fig:3d_finger_moves}).

\begin{itemize}
\item For each $A\in\mathcal{B}$, delete all the $R_i$ whose interiors intersect $A$, replacing each $A\in \mathcal{B}$ by $$A'=\overline{A\setminus \left(\bigcup_{i\in P(A)} R_i\right)}$$

\item Then, attach each $R_i$ to $B(\alpha_i)_0$, replacing each $A'$ (which may coincide with $A$, if $A$ did not intersect the interior of any $R_i$) by
$$A''=A'\cup \left(\bigcup_{i\in P_0(A)} R_i\right) $$
\end{itemize}

This process can be achieved by a homotopy of $C^n$ which restricts to isotopies on the interiors of the elements of $\mathcal{B}$. We imagine elements of $\mathcal{B}$ as growing fingers along the $\alpha_i$. From now on, we will simply call these {\it finger moves}  and will not describe them explicitly.

After performing finger moves on the elements of $\mathcal{L}_1$ along the $\alpha_i$, we can assume $\partial \mathcal{L}_1\setminus \partial C^n$ is connected.  Then inductively define  $\mathcal{L}_i=\{L\in \mathcal{B}\setminus \bigcup_{j=1}^{i-1}\mathcal{L}_{j}|L\cap \partial \mathcal{L}_{i-1}\neq\emptyset\}$, where $\partial \mathcal{L}_{i}$ is defined analogously to $\partial\mathcal{L}_{1}$.  
Since $\partial L$ is connected for each $L\in\mathcal{L}_2$ and meets $\partial \mathcal{L}_1\setminus \partial C^n$, we have that $\partial{\mathcal{L}}_2\cup (\partial{\mathcal{L}}_1\setminus \partial C^n)$ is connected. Continue inductively for each $3\leq i\leq m$, where $m$ is the number of layers.  By construction, $\partial L$ is connected  for each $L \in \mathcal{L}_i$ and intersects $\bigcup_{j=1}^{i-1} \partial\mathcal{L}_j\setminus \partial C^n$ non-trivially.  Therefore $\bigcup_{i=1}^m \partial\mathcal{L}_i\setminus \partial C
^n =\left(\partial  R\cup \partial B_1\cup \dots \cup \partial B_k\right)\setminus \partial C^n$ is connected, so 
$W=\overline{\left(\partial  R\cup \partial B_1\cup \dots \cup \partial B_k\right)\setminus \partial C^n}$
is connected as well.
\end{proof}

\begin{lemma}\label{lem:ClawsExist} Let $R$ be a compact $n$-manifold with connected boundary embedded in the $n$-cube $C^n$ such that $C^n=R \cup B_1\cup \dots \cup B_k$, with $k\leq n$, where each $B_i$ is a $n$-ball and such that the interiors of $R$ and the $B_i$ are pairwise disjoint.  After applying a self-homotopy of $C^n$ that restricts to an isotopy on the interior of each component in the above decomposition, we can find an $k$-claw embedded in $\partial C^n$ such that its regular neighborhood in $\partial C^n$ is a taloned pattern. 
    
\end{lemma}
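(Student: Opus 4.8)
The plan is to bootstrap from Lemma~\ref{connected_bdry_v2.lem}: after the homotopy provided there we may assume the complex $W$ of that lemma is connected, and the taloned pattern will then be assembled on $\partial C^n$ by a further sequence of \emph{finger moves}, one per talon. As a preliminary step I would arrange, by additional finger moves, that each of $R,B_1,\dots,B_k$ meets $\partial C^n$ in a nonempty $(n-1)$-submanifold; this is possible because $W$ is connected, so any piece not yet meeting $\partial C^n$ can be joined by a chain of adjacencies to one that does and then grown a finger along an arc realizing that chain (in the degenerate case that $\partial C^n$ lies in a single piece, first push a finger of a neighboring piece out to $\partial C^n$). Write $\Sigma_0=R\cap\partial C^n$ and $\Sigma_i=B_i\cap\partial C^n$, and fix a small $(n-1)$-disk $D_0\subset\mathrm{int}\,\Sigma_0$; this $D_0$ is to be the regular neighborhood of the central vertex of the eventual claw.

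The heart of the argument is an induction on $i$. Suppose $D_{i-1}\subset\partial C^n$ is the regular neighborhood of an $(i-1)$-claw realizing the taloned pattern for $R,B_1,\dots,B_{i-1}$ for the current (finger-move-modified) decomposition. To incorporate $B_i$, choose a point $x_i$ on the part of $\partial D_{i-1}$ lying in $\mathrm{int}\,\Sigma_0$, distinct from $x_1,\dots,x_{i-1}$, together with an embedded path in $\partial C^n$ from $\Sigma_i$ to $x_i$ whose interior is disjoint from $D_{i-1}$ and from the fingers created at the earlier stages. Growing a thin finger of $B_i$ along this path pushes a single $(n-1)$-disk of $B_i$ into $\partial C^n$ at $x_i$, biting a disk out of $R\cap D_{i-1}$; enlarging $D_{i-1}$ slightly across this bite produces a disk $D_i$ that is the regular neighborhood of an $i$-claw, whose new talon lies in $B_i$ and is disjoint from the others. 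Each finger move here is the local model of Lemma~\ref{connected_bdry_v2.lem} --- delete a boundary-parallel half-ball neighborhood of an arc on $\partial C^n$ from one piece (a dent, hence an isotopy of that piece) and attach it to $B_i$ (still a ball) --- so the whole construction is realized by a single self-homotopy of $C^n$ restricting to an isotopy on the interior of each of $R,B_1,\dots,B_k$; in particular $R$ stays embedded with connected boundary. After the step $i=k$, the disk $D_k$ is the desired taloned pattern.

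The main obstacle is the routing: one must guarantee that at each stage the path carrying $B_i$'s finger can be chosen to reach a fresh point of $\partial D_{i-1}$ without re-entering $D_{i-1}$ or colliding with the talons already built, and that the resulting finger neither merges two talons nor disconnects any piece (the channel it carves out of $R$ must be a genuine dent). Connectedness of $W$ supplies the paths, and general position makes them and their thin fingers disjoint from the configuration built so far; this is exactly where the hypothesis $k\le n$ is used, since it is what lets a $k$-claw sit suitably inside $\partial C^n\cong S^{n-1}$ in the low-dimensional cases --- for $n\ge 3$ there is ample room, but for $n=2$ one must fit the three regions $\Sigma_0,\Sigma_1,\Sigma_2$ into consecutive cyclic order on $S^1$. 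Verifying that all these moves compose into one homotopy that is simultaneously an isotopy on every piece interior, and that $R$ never loses connectivity of its boundary, is where the genuine bookkeeping lies.
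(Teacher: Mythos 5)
Your overall strategy --- reduce to Lemma~\ref{connected_bdry_v2.lem} and then drag each $B_i$ by finger moves to a prescribed spot adjacent to $R$ on $\partial C^n$ --- matches the paper's, but your routing is genuinely different. The paper does not build the pattern inductively around a disk in $R\cap\partial C^n$. It first uses connectedness of $\partial R$ to find a single point $p$ in the interior of a facet with $p\in\partial R\cap\partial B_1$, normalizes a small ball $B_\epsilon(p)$ so that $R$ and $B_1$ meet it in half-balls separated by the $(n-2)$-disk $R\cap B_1\cap B_\epsilon(p)\cap\partial C^n$, places target points $q_2,\dots,q_k$ on that $(n-2)$-disk, and routes paths $\delta_i$ from the remaining $B_i$ to the $q_i$ \emph{through the interior complex} $W$, not along $\partial C^n$. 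All talons then emerge inside $B_\epsilon(p)$, so adjacency to $R$ and mutual disjointness come for free from the local model; the cost is that the $\delta_i$ live in the $(n-1)$-complex $W$ and must be made simultaneously disjoint, which general position does not give when $n=3$ (the paper performs an oriented resolution there, and for $n=2$ notes there is only one path). Your sequential routing on $\partial C^n$ sidesteps that simultaneous-disjointness issue, which is a genuine simplification if the boundary paths exist.

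The existence of those paths is where your justifications go astray, in three places. First, ``connectedness of $W$ supplies the paths'' is wrong for your construction: $W=\overline{(\partial R\cup\bigcup_i\partial B_i)\setminus\partial C^n}$ is by definition the part of the walls \emph{off} the boundary, so its connectedness says nothing about arcs on $\partial C^n$ from $\Sigma_i$ to $x_i$. What you need is that the complement in $\partial C^n\cong S^{n-1}$ of $D_{i-1}$ together with the earlier fingers is connected (true: it is a disk with trees attached) \emph{and still meets $\Sigma_i$}; the latter requires keeping the transiting fingers thin relative to the not-yet-processed $\Sigma_j$ and is not automatic. Second, $k\le n$ is not ``what lets a $k$-claw sit inside $S^{n-1}$'': any finite claw embeds with the correct regular neighborhood in $S^{n-1}$ once $n\ge 3$. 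The bound comes from the ball-number theorem and is needed later so each $B_i$ gets its own facet $F_i$; within this lemma its only role is in dimension $2$, where it forces $k\le 2$. Third, your finger of $B_i$ runs along an arc on $\partial C^n$ whose interior may cross several pieces, whereas the finger moves of Lemma~\ref{connected_bdry_v2.lem} are defined for arcs interior to a single piece; you must either subdivide the arc into such sub-arcs or verify directly that carving the channel out of every piece it crosses is an isotopy of each (it is --- a boundary-collar dent --- but this is an extra check, not an instance of the move as defined).
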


\begin{proof}

By Lemma~\ref{connected_bdry_v2.lem}, we can assume $W=\overline{\left(\partial  R \cup \partial B_1\cup \dots \cup \partial B_k\right)\setminus \partial C^n}$ is connected, so we can perform a finger move on $R$ along a path in $W$ to ensure that $R$ meets $\partial C^n$. Since $R$ has a single boundary component and $(\partial R\cap \partial C^n) \subsetneq \partial C^n$, we can assume there exists a point $p$ on the interior of an $(n-1)$ face of $C^n$ that lies on $\partial R\cap\partial B_i$ for some $i$.  Relabeling the $B_i$ if necessary, we assume $i=1$.

Without loss of generality, assume $p$ lies on the face $F_1$ defined by $\{x_1=0\}\cap C^n$, and let $p=(0,p_2,\dots, p_n)$. After an isotopy of $C^n$, we can assume some $\epsilon$-ball $B_\epsilon(p)$ satisfies the following:

$$R\cap B_\epsilon(p)=\{(x_1,\dots, x_n)\in C^n\cap B_\epsilon(p)|x_2\geq p_2\}$$
$$B_1\cap B_\epsilon(p)=\{(x_1,\dots, x_n)\in C^n\cap B_\epsilon(p)|x_2\leq p_2\}.$$
We can further assume that $R\cap B_1\cap B_\epsilon(p)=W\cap B_\epsilon(p)$ and $R\cap B_1\cap B_\epsilon(p)=\{(x_1,\dots, x_n)\in C^n\cap B_\epsilon(p)|x_2=p_2\}$.

Choose distinct points $q_2,\dots,q_{k}$ on the $(n-2)$ disk $R\cap B_1\cap B_\epsilon(p)\cap \partial C^n$, as in Figure~\ref{fig:boundary_isotopy}.  We claim that one can choose disjoint paths $\delta_i\subset W$ from a point $r_i$ in $B_i\cap int(C^n)$ to the point $q_i$ for each $2\leq i \leq k$.

To produce the $\delta_i$, we again apply Lemma ~\ref{connected_bdry_v2.lem}.  In dimensions 4 and higher, we can achieve disjointness of the $\delta_i$ by a perturbation. In dimension 3, we perform an oriented resolution at each point of intersection of the $\delta_i$'s  which can not be removed by perturbation inside $W$. In dimension 2, there is only one such path, $\delta_2$, since $2\geq i\geq k=2.$

Once the paths are disjoint, we perform a finger move which pushes a neighborhood of $r_i$ in $B_i$ along $\delta_i$ to a neighborhood of $q_{i}$ in $B_\epsilon(p)$. As a result, the balls $B_i$ intersect $\partial C^n\cap B_\epsilon(p)$ in the boundary pattern shown in Figure~\ref{fig:boundary_isotopy} (middle).  We then choose a claw as shown in Figure~\ref{fig:boundary_isotopy} (bottom).  The  regular neighborhood of this claw in $\partial C^n$ is isotopic to a taloned pattern (Figure~\ref{fig:claw}), as desired. \end{proof}

\begin{figure}
    \centering
    \includegraphics[width=3in]{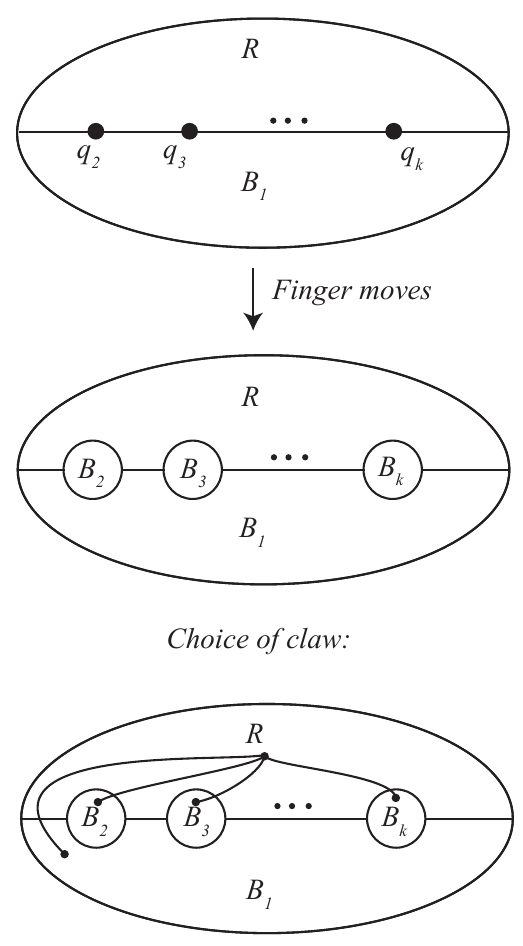}
    \caption{Three views of  $\partial C^n\cap B_\epsilon(p)$, showing the stages of obtaining the claw.  First, perform finger moves so that each ball $B_i$, with $i\geq 2$, meets $\partial C^n$ along the $(n-2)$-disk of intersection of $R$ and $B_1$ inside $B_\epsilon(p)$.  One can then choose a $k$-claw (bottom) which has a small regular neighborhood in $\partial C^n$ giving a taloned pattern of intersection.  }
    \label{fig:boundary_isotopy}
\end{figure}
\vspace{.3cm}

\subsection{Proof of main theorem.} \label{sec:rotation}

We begin by setting up the necessary notation. For each $i=1, \dots, n$, let $F_i$ be the $(n-1)$-dimensional face of the $n$-cube $C^n$ contained in the hyperplane $x_i=0$. For the moment, we will assume that $n$ is even. The case of $n$ odd requires an extra step, which we leave until the end of the proof. 

Let $r_i: \mathbb{R}^n \to \mathbb{R}^n$ be the rotation by $\frac{\pi}{2}$ about the $(n-2)$-plane $x_{2i-1}= x_{2i}=0$ that carries the $x_{2i-1}-$axis to the $x_{2i}-$axis. Note that each $r_i$ has order four and that these rotations commute, generating a group isomorphic to $(\mathbb{Z}_4)^{n/2}$. Given a vector $\mathbf{y} = (y_1, \dots, y_{n/2}) \in (\mathbb{Z}_4)^{n/2}$,  we define the rotation ${\bf r_y}$ as follows:
$${\bf r_y}=r_{n/2}^{y_{n/2}} \circ \dots \circ r_1^{y_1}.$$
We set $C_\mathbf{y}: =\mathbf{r_y}(C^n).$

We claim that the orbit of a unit sub-cube under this group action is the entire $n$-dimensional cube $\boxplus := [-1,1]^n$. In other words, $\boxplus$ is tiled by the $2^n$ distinct unit cubes $\{\mathbf{r_y}(C^n)|\mathbf{y} \in (\mathbb{Z}_4)^{n/2}\}.$  

To see this, first decompose $\boxplus$ into $2^n$ unit sub-cubes of the form $J_1 \times \dots \times J_n$, where each $J_i$ is either $[-1,0]$ or $[0,1]$. Fixing $k$, for each choice of $J_{2k-1}$ and $J_{2k}$ from the set $\{[-1,0], [0,1]\}$, the product $J_{2k-1}\times J_{2k}$ is a unit square in the $x_{2k-1}x_{2k}-$ plane, which we denote by $\mathbb{R}^2_k.$ Let $P_k:=C^n \cap \mathbb{R}^2_k$, i.e $P_k$ is the unit square in the first quadrant of $\mathbb{R}^2_k$.  Then $J_{2k-1}\times J_{2k}=r^{y_k}(P_k)$ for some $y_k\in\{0,1,2,3\}$. 
Hence, each of the $2^n$ unit cubes above can be expressed as
 $$J_1 \times \dots \times J_n = r_1 ^{y_1}(P_1) \times \dots \times r_{n/2} ^{y_{n/2}}(P_{n/2})= C_\mathbf{y}$$ for some $\mathbf{y} = (y_1, \dots, y_{n/2}) \in (\mathbb{Z}_4)^{n/2}$. Moreover, for each $J_1 \times \dots \times J_n$, the $\mathbf{y}$ such that $J_1 \times \dots \times J_n = C_\mathbf{y}$ is unique. To see this, note that each $J_1 \times \dots \times J_n$ has exactly one corner with all nonzero coordinates (and therefore with all coordinates $\pm 1$). On the other hand, the cube $C_\mathbf{y}$ also has exactly one corner $(c_1,\dots, c_n)$ with all $c_i=\pm 1$ (namely, the image of the point $(1, 1, 1, \dots, 1)\in C^n$), and its coordinates satisfy the formula $y_k=-(c_{2k}-1)-\frac{1}{2}(c_{2k-1}c_{2k}-1).$ In other words, the coordinates $(c_1,\dots c_n)$ uniquely determine each component $y_k,$
 and therefore $\mathbf{y}$ itself.

Observe that the cube $r_k(C^n)$ intersects $C^n$ along its face $F_{2k-1}$, and the cube $r^{-1}_{k}(C^n)$ intersects $C^n$ along its face $F_{2k}$. Thus, each rotation $r_k$ gives a pairing of the faces of $C^n$. We use this pairing to carry out a cube swap as previously described. This will allow us to build the rep-tile $R^*$. 

\subsection{Realizing the taloned pattern on $\partial C^n$} \label{sec:bdry pattern}

We will now describe a homotopy of $C^n$ which restricts to an isotopy on the interiors of $R$ and the balls $B_1,\dots, B_n$. Our goal is to use Lemma~\ref{lem:ClawsExist} to position $R$ and $B_1,\dots, B_n$ so that their intersections with the boundary of $C^n$ satisfy: 

 \begin{enumerate} 
 \item \label{bdry_pattern_prop1} For each $1\leq i \leq n$, the only ball meeting the face $F_i$ is $B_i$ (and thus $F_i\setminus (B_i\cap F_i) \subset R$),  
 \item \label{bdry_pattern_prop2} $r_k(B_{2k}\cap F_{2k})\subset F_{2k-1}$ is disjoint from $B_{2k-1}$, and 
 \item \label{bdry_pattern_prop3} $r_k^{-1}(B_{2k-1}\cap F_{2k-1})\subset F_{2k}$ is disjoint from $B_{2k}$.
 \end{enumerate}

In what follows, we refer the reader to a schematic in Figure~\ref{fig:original_cube}.  Figure~\ref{fig:original_cube_4D} illustrates this configuration in dimension 4. 

For each $k = 1, \dots, \frac{n}{2}$, let $\varphi_{2k-1}$ be the $(n-2)$-facet in $C$ equal to the intersection of $C$ with the $(n-2)$-plane given by setting $x_{2k-1}=0$ and $x_{2k}=1$. Likewise, let $\varphi_{2k}$ be the $(n-2)$-facet in $C$ equal to the intersection of $C$ with the $(n-2)$-plane given by setting $x_{2k-1}=1$ and $x_{2k}=0$. Note that this pair of facets are exactly those that are simultaneously parallel to the intersection $F_{2k-1} \cap F_{2k}$ and contained in $F_{2k-1} \cup F_{2k}$. 

\begin{figure}[htbp]
    \centering
    \includegraphics[width=\textwidth]{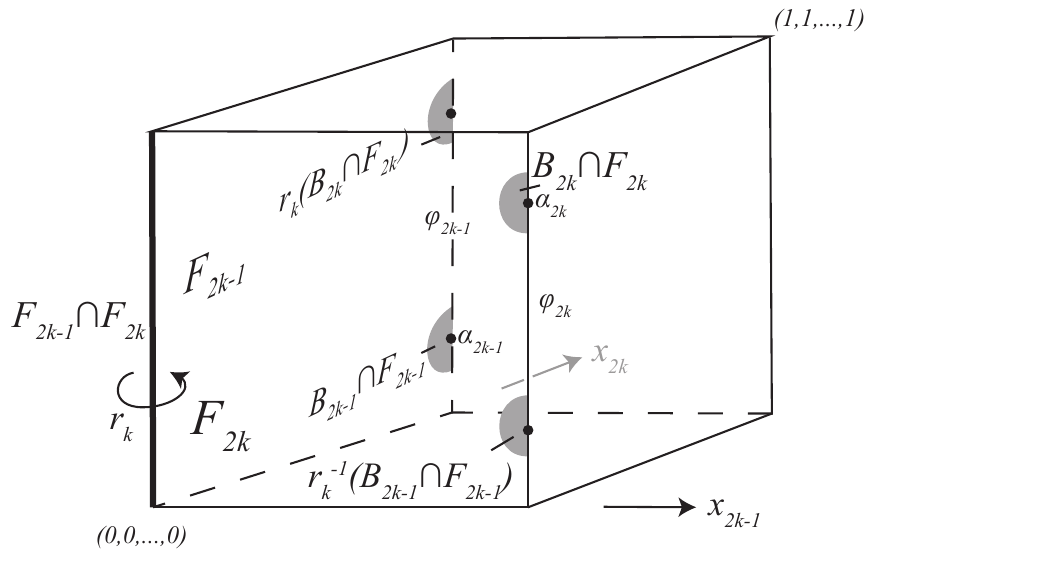}
    \caption{Intersections of $B_{2k-1}$ and $B_{2k}$ with faces $F_{2k-1}$ and $F_{2k}$ of $\partial C^n$, and their images under the rotations $r_k^{-1}$ and $r_k$,  respectively. }
    \label{fig:original_cube}
\end{figure}

\begin{figure}[htbp]
    \centering
    \includegraphics[width=\textwidth]{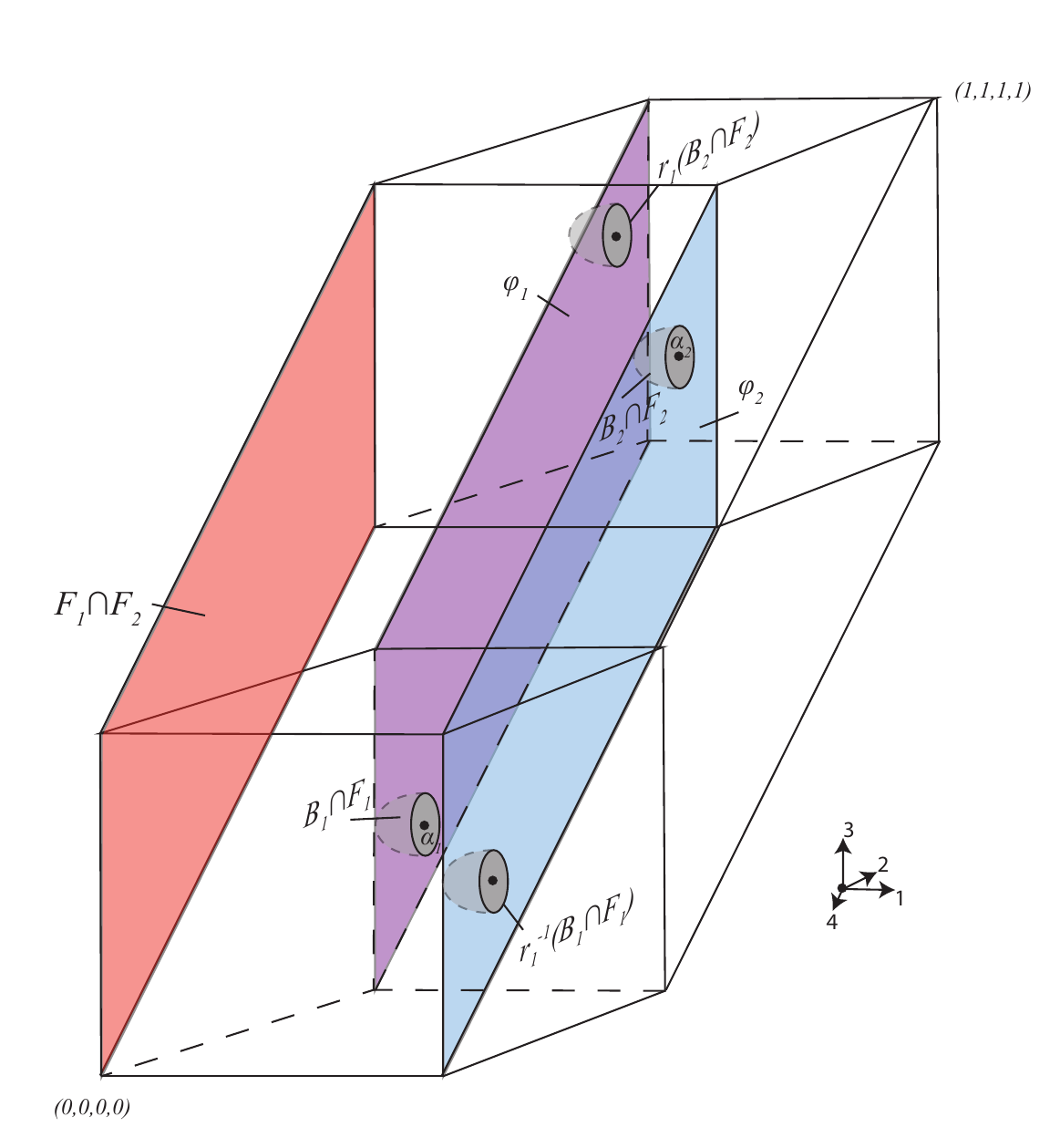}
    \caption{Intersections of $B_1$ and $B_2$ with faces $F_1$ and $F_2$ of $\partial C^4$, and their images under the rotations $r_1^{-1}$ and $r_1$,  respectively.}
    \label{fig:original_cube_4D}
\end{figure}

Now, we fix points $\alpha_{2k-1} \in \varphi_{2k-1}$ and $\alpha_{2k} \in \varphi_{2k}$ by setting $$\alpha_{2k-1} = \left(\frac{1}{4}, \dots, \frac{1}{4}, 0 ,1, \frac{1}{4}, \dots, \frac{1}{4}\right)$$ and $$\alpha_{2k} = \left(\frac{3}{4}, \dots, \frac{3}{4}, 1, 0, \frac{3}{4}, \dots, \frac{3}{4} \right),$$ where the $0$ and $1$ entries are taken to be in the $(2k-1)^{st}$ and $2k^{th}$ coordinates. 

 Let $B_1, B_2, \dots, B_n$ be the $n$-balls whose existence is guaranteed by Theorem \ref{theorem:ballnumberboundary}. By Lemma \ref{lem:ClawsExist}, after an isotopy of $R$ and the $B_i$, there is an $n$-claw embedded in $\partial C^n$ such that its regular neighborhood in $\partial C^n$ is a taloned pattern as shown in Figure~\ref{fig:claw}. Moreover, after an isotopy of $C^n$ supported near its boundary, we can assume that the taloned pattern is mapped homeomorphically to $\bigcup_{i=1}^{n} F_i$ such that the intersection $F_i \cap B_i:=N_i$ is a closed regular neighborhood of radius $1/8$ of the point $\alpha_i$ in $F_i$, and also that if $i \not= j$, then $F_i \cap B_j = \emptyset$. We do not assume any restrictions on the intersections of $R$
 and the $B_i$ with the remaining faces $x_i=1$ of~$C^n$.

Note that this set-up has several convenient consequences. First, the union $\bigcup_{i=1}^{n} F_i$ intersects $\partial R$ in a single $(n-1)$-ball, since $R$ meets the taloned pattern in a single $(n-1)$-ball.  
Furthermore, the center and radius of $N_{2k}$ were chosen to guarantee that the ball $r_k(N_{2k})\subset F_{2k-1}$ is disjoint from the neighborhood $N_{2k-1}$, and therefore contained in $F_{2k-1}\backslash N_{2k-1} =R \cap F_{2k-1}$. Similarly, the ball $r_k^{-1}(N_{2k-1})$ is contained in $F_{2k}\backslash
N_{2k}= R \cap F_{2k}$.

\subsection{Cubification of the decomposition}  \label{sec:cubification}
Recall that for any positive integer $m$, by $\mathcal{C}(\mathcal{Z}_{\frac{1}{m}}^n)$ we denote the lattice in $\mathbb{R}^n$ whose unit cubes have side length $\frac{1}{m}$.

Let $W=\overline{(\partial R \cup \partial B_1 \cup \dots \cup \partial B_n)\setminus \partial C^n}$. Since $R$ and each $B_i$ can be assumed piecewise-smooth, $W$ has a closed regular neighborhood $N(W)$. Being a codimension-0 compact submanifold of $\mathbb{R}^n,$ it is isotopic to a polycube, also denoted $N(W)$, in a sufficiently fine lattice $\mathcal{C}(\mathcal{Z}_{\frac{1}{m}}^n)$, by Proposition \ref{Prop:Smooth_to_poly}. (In the course of cubification, we shall increase $m$ as needed without further comment.) We also assume that all cubes in $N(W)$ which intersect $R$ form a regular neighborhood of $\partial R$. Similarly for each $B_i$;  and for each double intersection, $\partial B_i\cap \partial B_j$ or $\partial R\cap \partial B_i$; and each triple intersection, etc.

The closure $\overline{R\backslash N(W)}$ is then also a polycube; similarly for each $\overline{B_i\backslash N(W)}$. To complete the cubification of the ensemble $\{R, B_1, \dots, B_n\},$ we assign cubes in $N(W)$ back to the constituent pieces in an iterative fashion. Specifically, all cubes in $N(W)$ which intersect $R$ are assigned to $R$, and their union is denoted $R^{cu}$; of the remaining cubes, all that intersect $B_1$ are assigned to $B_1$, and the resulting polycube is denoted $B_1^{cu}$; and so on. By the above assumptions, each of the pieces $\{R, B_1, \dots, B_n\}$ is isotopic to the corresponding polycube since we are only adding or removing small cubes intersecting the boundary. In addition, the union of the interiors of $\{R, B_1, \dots, B_n\}$ is isotopic to the union of the interiors of $\{R^{cu}, B_1^{cu}, \dots, B_n^{cu}\}$.

Furthermore, by selecting a sufficiently fine lattice, we can ensure that the isotopies performed, taking each of $\{R, B_1, \dots, B_n\},$ to a polycube, are arbitrarily small. Thus, they preserve properties (1), (2) and (3) from Section~\ref{sec:bdry pattern}.

Recycling notation, we will from now on refer to $R^{cu}$, $B^{cu}_1$, $\dots$, $B^{cu}_n$ as $R$, $B_1$, $\dots$, $B_n$ respectively.

\vspace{-.1cm}

\subsection{Construction of the rep-tile.} Finally we construct our rep-tile $R^* \subset \boxplus$:

$$R^*=R \cup \left(\bigcup_{k=1}^{n/2} r_k^{-1}(B_{2k-1})\cup r_k(B_{2k})\right)$$

Recall that, at this stage of the construction, $R$ and all the $B_j$ are polycubes, and therefore so is $R^\ast$. We claim that {\bf(1)} $R^*$ is isotopic to $R$, and {\bf(2)} $2^n$ isometric copies of $R^*$ tile the cube $\boxplus$. A schematic of $R^*$ in dimension $n=3$ is shown in Figure ~\ref{fig:3dcubesschematic_adjacentonly} (for intuition in the case of $n$ even, simply ignore $B_3$ and its rotated copy in the figure).


\vspace{3mm}
\begin{figure}[htbp]
    \centering
    \includegraphics[width=\linewidth]{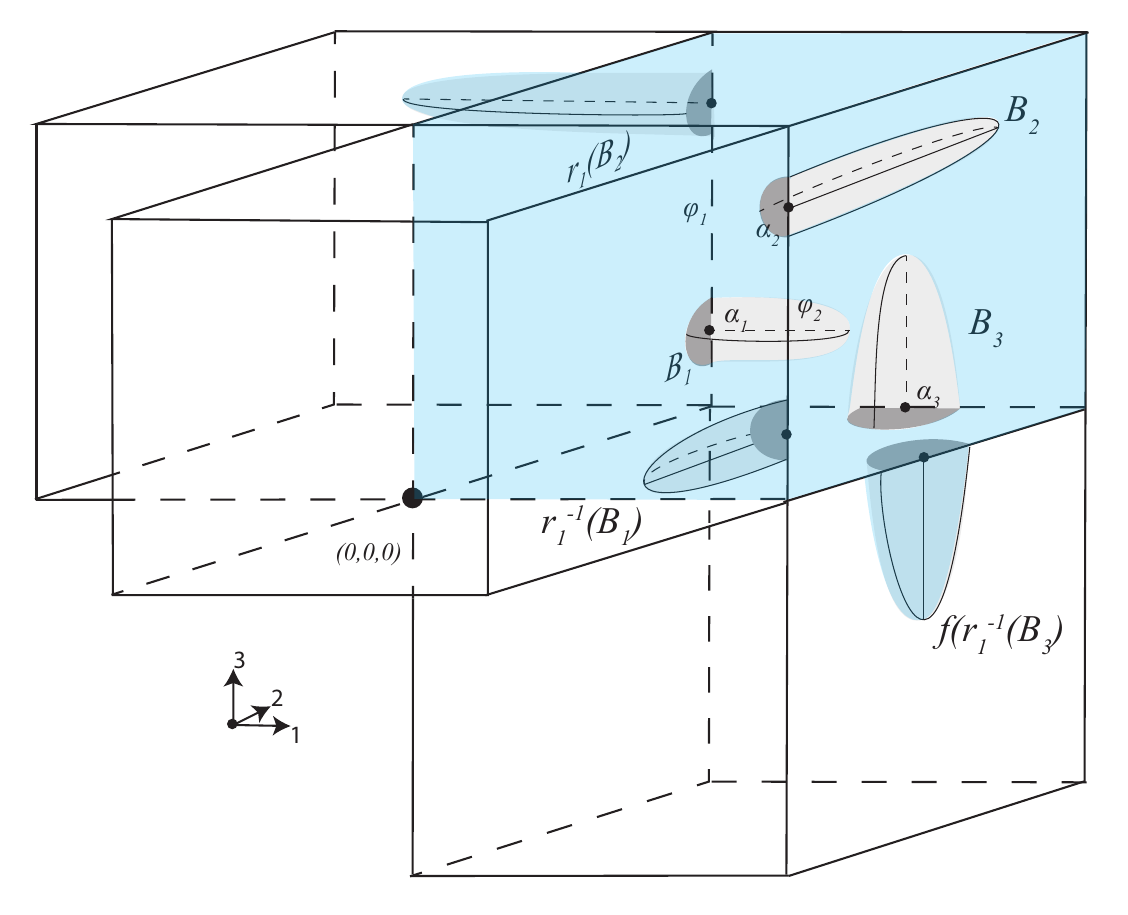}
    \caption{Schematic of the construction of $R^*$, shown in blue. In this picture, the unions of cubes which undergo cube swaps are drawn as balls.}
    \label{fig:3dcubesschematic_adjacentonly}
\end{figure}

\begin{figure}[htbp]
    \centering
    \includegraphics[width=2in]{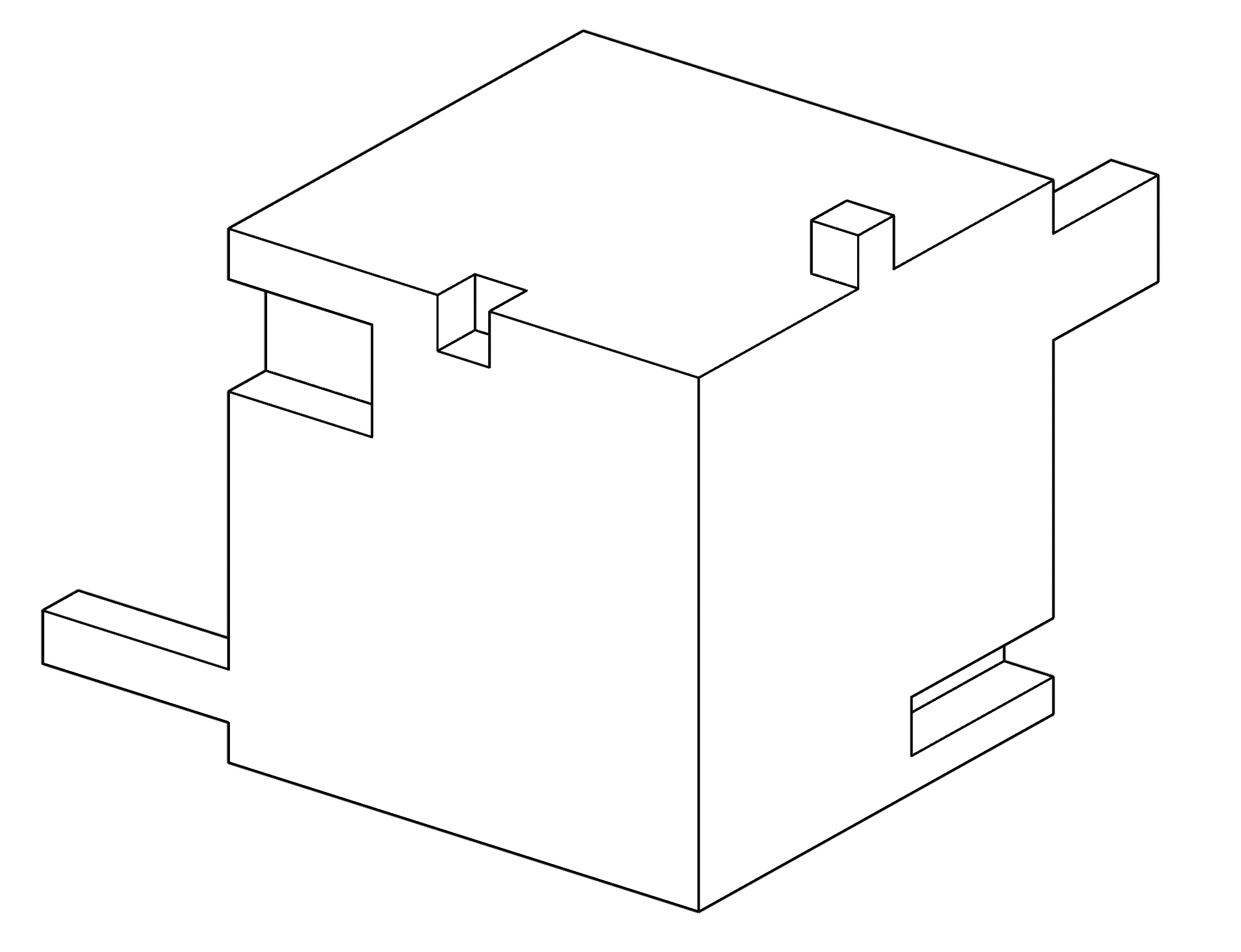}
    \caption{Example of a rep-tile $R^\ast$ obtained by cube swapping.} 
    \label{fig:3d-bw-reptile}
\end{figure}

\begin{figure}[h]
    \centering
    \includegraphics[width=2.25in]{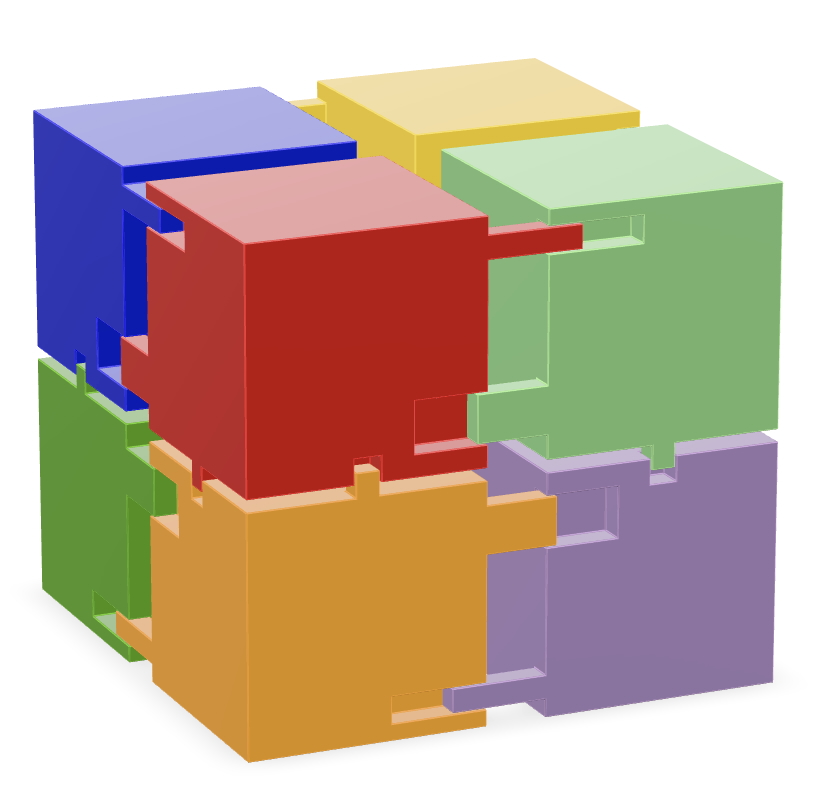}
    \caption{Eight copies of $R^\ast$ tiling the cube. This figure is from a 3D model available at \cite{Geissler3D}, where the reader can rotate the tiled cube and examine the rep-tile from all sides.} 
    \label{fig:front-view-reptiles}
\end{figure}



\emph{Proof of (1).} 
The images of the cube $C^n$ under the rotations $r_1, r_1 ^{-1}, \dots, r_{n/2}, r_{n/2}^{-1}$ give a family of $n$ distinct unit cubes in $\boxplus$, each of which shares a unique face with $C^n$. More specifically, the cube $r_k(C^n)$ intersects $C^n$ along its face $F_{2k-1}$, and the cube $r_k^{-1}(C^n)$ intersects $C$ along its face $F_{2k}.$  Refer to Figure~\ref{fig:original_cube}.

It follows that the intersections of each ball $r_k(B_{2k})$ and $r_k^{-1}(B_{2k-1})$ with the cube $C^n$ are disjoint $(n-1)$-balls contained in $\partial R \cap C^n$. (Recall that the center and radius of the $N_i$ were chosen carefully so that this is the case.) Therefore, $R^*$ is a boundary connected sum of $R \subset C^n$  with a collection of $n$-balls, one in each neighboring cube. An isotopy therefore brings $R^*$ to the initial embedding of $R$, as desired. This concludes the proof of (1).

\vspace{3mm}

\emph{Proof of (2).} Let $R_{\bf y}: = {\bf r_y}(R)$, $B_{i\,{\bf y}}:= {\bf r_y}(B_i)$, and $R^*_{\bf y}: = {\bf r_y}(R^*)$. Note that, since $C^n=R \cup(\cup_i B_i)$, we have that  $R_{\bf y}\subseteq C_{\bf y}$ and $B_{i\,{\bf y}}\subseteq C_{\bf y}$. In addition, the first equality on the next line clearly implies the second:
\[
\boxplus=\bigcup_{\mathbf{y}\in (\mathbb{Z}_4)^{n/2}} C_{\bf y}= \left(\bigcup_{\mathbf{y}\in (\mathbb{Z}_4)^{n/2}} R_{\bf y}\right)\cup \left(\bigcup_{\mathbf{y}\in (\mathbb{Z}_4)^{n/2} }(\cup_i B_{i\,{\bf y}}) \right).
\]
We now show that $$\boxplus=\bigcup_{\mathbf{y}\in (\mathbb{Z}_4)^{n/2}} R^*_{\mathbf{y}}.$$ Since $\boxplus$ decomposes into the cubes $C_{\mathbf{y}}$, it is sufficient to show that every point $p \in C_{\mathbf{y}}$ is contained in $R^*_{\bf v}$ for some ${\mathbf{v}\in (\mathbb{Z}_4)^{n/2}}$. This is a consequence of the fact that $R^*$ is the union of $R$ and one ball from the orbit of $B_i$ for each $i$. However, this fact may not be self-evident, so we provide an explicit proof.

Consider a point $p \in C_{\mathbf{y}}$. If $p$ is in the orbit of $R$, then $p \in R_{\mathbf{y}} \subset C_{\mathbf{y}}$, so $p \in R_{\bf y} \subseteq R^*_{\mathbf{y}}$. Now, suppose $p\in B_{i\,{\bf y}}$ for some $i=1, \dots n$. To find which rotation of $R^*$ contains $p$, consider the isometric ball $B_i \subset C^n$. There are two cases: if $i=2k-1$, then $B_i \subset r_k(R^*)$, and if $i=2k$, $B_i \subset r_k^{-1}(R^*)$.  

Let $\mathbf{v} \in (\mathbb{Z}_4)^{n/2}$ be the vector with $r_{\mathbf{v}}$ equal to $r_{\mathbf{y}}\circ r_k$ if $i=2k-1$ and $r_{\mathbf{y}}\circ r_k^{-1}$ if $i=2k$.  In other words, the vector $\mathbf{v}$ is equal to the vector $\mathbf{y}$ modified only by shifting its $k^{th}$ coordinate by $\pm 1$. Observe that $(B_i)_\mathbf{y} \subset (R^*)_\mathbf{v}$. This shows that $\boxplus$ indeed is equal to the union of the $R^*_\mathbf{y}$. 

To show that $\boxplus$ is \emph{tiled} by isometric copies of $R^*$, we need to check that the $R^*_\mathbf{y}$ have non-overlapping interiors. First observe that $R^*$ has $n$-volume 1, and that $\boxplus$ has $n$-volume $2^n$. Since exactly $2^n$ isometric copies of $R^*$ make up $\boxplus$, they must have disjoint interiors.  This concludes the proof of (2).

\subsection{Constructing the rep-tile in odd dimensions}
We have yet to handle the case where $n$ is odd, i.e. $n = 2m+1$ for some integer $m>0$. As before, let $F_i$ denote the face of $C^n$ intersecting the $(n-1)$-plane where $x_i = 0$. In this case, in addition to the rotations $r_1, \dots, r_m$ defined above, we require an additional rotation $f: \mathbb{R}^n \to \mathbb{R}^n$ by an angle of $\pi$ about the $(n-2)$-plane where $x_{n-1} = 0 = x_ n$. Note that by definition, $F_n = f \circ r_{(n-1)/2}^{-1}(F_n)$, and so $f \circ r_{(n-1)/2}^{-1}$ carries $C^n$ to its $n^{th}$ neighboring cube in $\boxplus$. 

For $i=1, \dots, n-1$, choose points $\alpha_i \in F_i$ as before. Choose the point $\alpha_n$ on the $(n-2)$-facet of $F_n$ where $F_n$ intersects the $(n-1)$-plane $x_{n-1}=1$.  More specifically, we let 
$$\alpha_{n} = \left(\frac{1}{2}, \dots, \frac{1}{2}, 1 ,0 \right)$$ 
and $N_n$ be a neighborhood of $\alpha_n$ in the face $F_n$ with radius $1/8$. This guarantees that $f\circ r_{(n-1)/2}^{-1} (N_n)$ is disjoint from $N_n$. Therefore, we can again define the boundary sum:

$$R^*=R \cup \left( \bigcup_{k=1}^{m} r_k^{-1}(B_{2k-1})\cup r_k(B_{2k})\right) \cup \left(f\circ r_{(n-1)/2}^{-1}(B_n)\right),$$

\noindent which is isotopic to $R$ and tiles $\boxplus=[-1,1]^n$ as before. To complete our proof that $R^*$ is a rep-tile for any $n$, we appeal to Lemma \ref{mantra}. 
\qed 

\subsection{All is non-rep-tile}

In this section we show that every smooth compact $n$-manifold in $\mathbb{R}^n$ is isotopic to a submanifold of $\mathbb{R}^n$ that is not a rep-tile. This shows that Theorem \ref{thm:main} is best possible in the sense that manifolds may satisfy the hypotheses of Theorem~\ref{thm:main} yet fail to be be rep-tiles, unless an isotopy is applied. In fact, a refinement of the below result would show that ``most'' manifolds in the isotopy class of a rep-tile are not themselves rep-tiles.

\begin{proposition}\label{Prop:best}
    Every smooth $n$-dimensional submanifold of $\mathbb{R}^n$ is topologically isotopic to a submanifold that does not tile $\mathbb{R}^n$.
\end{proposition}

\begin{proof}
Given a connected $n$-dimensional polycube $X$, we will refer to the constituent $n$-cubes of $X$ as {\it unit} cubes and we will say that a unit $n$-cube $C$ in $X$ is a \emph{peninsula} if it intersects the other {\it unit} cubes of $X$ along exactly one face.

Suppose $M$ is a smooth $n$-dimensional submanifold of $\mathbb{R}^n$. By Proposition \ref{Prop:Smooth_to_poly}, we can isotope $M$ to be an $n$-dimensional polycube $X$ made of (sufficiently small) unit $n$-cubes and assume that $X$ contains $k$ such cubes. Subdivide every unit $n$-cube in $X$ into $3^n$ subcubes creating a $n$-dimensional polycube $X’$ that is the union of $(3^n)k$ cubes, but is equal to $X$ as a set. Note that $X'$ cannot contain any peninsulas. Let $C$ be a unit $n$-cube of $X$ that meets $\partial X$ in a face $F$. Let $C'$ be the $n$-cube of side length $
\frac{1}{3}$ in $X’$ that meets the center of $F$. Then $X''=\overline{X'\setminus C'}$ is an $n$-dimensional polycube containing $(3^n)k-1$ $n$-cubes, and $X''$ is isotopic to $M$. 

Since $X’$ has no peninsulas, the only peninsulas for $X''$ must be contained in the $n$-cube $C$. However, if $n\geq 3$, each of the $n$-cubes of $X''$ in $C$ meet at least 2 other cubes along faces.  Hence, if $n\geq 3$, then $X''$ has no peninsulas. In the case when $n=2$, to ensure $X''$ has no peninsulas we must choose $C$ so that $C$ meets the boundary of $X$ in exactly one face $F$. We can ensure such a $C$ exists by first subdividing the initial $X$. In each case, $X''$ has no peninsulas.

Suppose that $X''$ tiles $\mathbb{R}^n$. Then there is an isometric copy of $X''$, denoted $X_{1}''$, that contains a cube $C_1$ that fills the hole created by the removal of $C'$ from $X'$. Thus, $C_1$ is a peninsula for $X_{1}''$, which is impossible. 
\end{proof}

Since every $n$-dimensional rep-tile tiles $\mathbb{R}^n$, an immediate consequence of the above proposition is that every smooth $n$-dimensional submanifold of $\mathbb{R}^n$ is topologically isotopic to a submanifold that is not a rep-tile.

\vspace{.5cm}

\subsection*{Acknowledgments: }This paper is the product of a SQuaRE. We are indebted to AIM, whose generous support and hospitality made this work possible. AK is partially supported by NSF grant DMS-2204349, PC by NSF grant DMS-2145384, RB by NSF grant DMS-2424734, and HS by NSF grant DMS-1502525. We thank Kent Orr for many helpful discussions. We are grateful to Richard Schwartz for his feedback on the first version of this paper; and for numerous valuable suggestions, notably a simplification of our original construction of spherical rep-tiles.

\section*{Ball Number}

\noindent{\it Let $R$ be a frog with a cube for a bride}

\noindent{\it Place $R$ in a box with some balls beside}

\noindent{\it Set free, the balls}

\noindent{\it Dance through walls}

\noindent{\it Out plops a Rep-tile with frogs inside}

\bibliographystyle{alpha}

\end{document}